\numberwithin{equation}{section}
\theoremstyle{theorem}
\newtheorem{theorem}{Theorem}[section]
\newtheorem{proposition}[theorem]{Proposition}
\newtheorem{lemma}[theorem]{Lemma}
\theoremstyle{remark}
\newtheorem{remark}[theorem]{Remark}
\theoremstyle{definition}
\title{Global Stability of the Periodic Solution of the Three Level Food Chain Model with Extinction of Top Predator}
\author[1]{Kaijen Cheng
}
\author[2, 3, 4]{Hongming You
}
\author[3]{Ting-Hui Yang\footnote{email : thyang@gms.tku.edu.tw, Research was partially supported by National Council of Science, Taiwan, Republic of China.}}
\affil[1]{School of Mathematics and Statistics, Zhaoqing University, Zhaoqing, Guangdong 526061, PR China.}
\affil[2]{College of Mathematics and Computer Science, Quanzhou Normal University, Quanzhou, 362000, PR China}
\affil[3]{Department of Mathematics, Tamkang University, No. 151 Yingzhuan Road, Tamsui Dist., New Taipei City 25137, Taiwan. (R.O.C.)}
\affil[4]{Fujian Provincial Key Laboratory of Data-Intensive Computing, Key Laboratory of Intelligent Computing and Information Processing, School of Mathematics and Computer Science, Quanzhou Normal University, Quanzhou, China}
\begin{document}
\maketitle
\begin{abstract}
In this work, we revisit the classical Holling type II three species food chain model from a different viewpoint. Two critical parameters $\lambda_1$ and $\lambda_2$ dependent on all  parameters are defined. The existence and local stabilities of all equilibria can be reformulated by $\lambda_1$ and $\lambda_2$, and the complete classifications of parameters and its corresponding dynamics are given. Moreover, with the extinction of top-predator, there is an invariant two dimensional subsystem containing the prey and the intermediate predator. We prove the global stability of the boundary equilibrium in $\mathbb{R}^3_+$ by differential inequality as well as Butler-McGehee lemma if it is stable. Alternatively, there is a unique limit cycle when the boundary equilibrium lost its stability, and we also show the global stability of the limit cycle in $\mathbb{R}^3_+$ by differential inequality and computing the Floquet Multipliers. Finally, some interesting numerical simulations, the chaotic and the bi-stability phenomena, are presented numerically.  A brief discussion and biological implications are also given.
\end{abstract}
{\bf Keywords : }Three Species; Predator-Prey; Food Chain Models; Global Dynamics; Global Stability of Equilibrium; Global Stability of a Limit Cycle; Point-Cycle Bistability; Cycle-Cycle Bistability.

\section{Introduction}
In this work, we consider the following three species food chain model with the Holling Type II interaction among the populations,
\begin{align}\label{original_model}
\frac{dX}{dT}=& RX\left(1-\frac{X}{K}\right)-\frac{M_1}{C_1}\frac{XY}{A_1+X},\nonumber \\
\frac{dY}{dT}=& -D_1Y+\frac{M_1XY}{A_1+X}-\frac{M_2}{C_2}\frac{YZ}{A_2+Y}, \\
\frac{dZ}{dT}=& -D_2Z+\frac{M_2YZ}{A_2+Y}, \nonumber \\
X(0) \ge & 0, \ Y(0)\ge 0, \ Z(0) \ge 0, \nonumber
\end{align}
where the species $X$ is a renewable resource, $Y$ is the intermediate predator which predates on $X$, and $Z$ is the top predator which predates on $Y$. The constant $R$ is the intrinsic growth rate; $K$ is the environmental carrying capacity of species $X$; $C_1$ and $C_2$ are conversion rates of prey to predator for species $Y$ and $Z$, respectively; $D_1$ and $D_2$ are constant death rates for species $Y$ and $Z$, respectively; $M_1$, $M_2$, $A_1$ and $A_2$ parametrize the saturating functional response where $A_1$ and $A_2$ are the prey population levels where the predation rate per unit prey is half its maximum value.

To simplify the investigation, we rewrite the model \eqref{original_model} in non-dimensional form. Letting
\begin{equation}\label{rescaling}
\begin{aligned}
& t= RT, \ x=\frac{X}{K},\ y= \frac{M_1Y}{C_1KR},\ z= \frac{M_1M_2Z}{C_1C_2KR^{2}}, \
 m_1= \frac{M_1}{R},\\ 
 & m_2= \frac{M_2}{R},\
 a_1= \frac{A_1}{K},\ a_2= \frac{M_1A_2}{C_1KR},\
 d_1= \frac{D_1}{R},\ d_2= \frac{D_2}{R}, \\
\end{aligned}
\end{equation}
system \eqref{original_model} takes the form
\begin{align}\label{foodchain}
\frac{dx}{dt}=& x\Big(1-x-\frac{y}{a_1+x}\Big),\nonumber \\
\frac{dy}{dt}=& y\Big(-d_1+\frac{m_1x}{a_1+x}-\frac{z}{a_2+y}\Big), \\
\frac{dz}{dt}=& z\Big(-d_2+\frac{m_2y}{a_2+y}\Big), \nonumber
\end{align}
with six parameters $a_i$, $d_i$ and $m_i$ for $i=1, 2$. It is worthy to note that system \eqref{foodchain} can be seen as a combination of two predator prey invariant subsystems, the $x$-$y$ subsystem and the $y$-$z$ subsystem which are well studied. For the two-dimensional $x$-$y$ subsystem, Lyapunov method or phase plane analysis can show a prey-only or coexistence equilibrium of the $x$-$y$ subsystem is globally stable for certain parameters, and, for other parameters, a periodic solution exists. Furthermore, it has been shown that the periodic solution is the unique globally stable limit cycle. Please refer  \cite{Hsu:1978by, Cheng:1981, Cheng:1981ga} and the references cited therein. For the two-dimensional $y$-$z$ subsystem, it is easy to show that $y$ and $z$ approach zero asymptotically.

From another point of view, system \eqref{foodchain} just an invariant $x$-$y$ subsystem coupling with the top predator $z$ which affects only on intermediate predator $y$. 
However, system \eqref{foodchain} is actually a three dimensional system, hence the classical powerful tool, Poincar\'e-Bendixson Theorem, can not be applied to it. So it is difficulty to show global results of system \eqref{foodchain} analytically and this model is also the simplest one of all three trophic level ecosystems with rich dynamics including chaos. Even though system \eqref{foodchain} is investigated over hundreds articles  
on the past fifty years.


In this work, motivated by \cite{Hsu:1978vk},  two critical parameters $\lambda_1$ and $\lambda_2$ are defined by $\lambda_{i}=\frac{a_{i}d_{i}}{m_{i}-d_{i}}$ for $i=1, 2$ which represent the minimum prey population density that can support a given predator for the $x$-$y$ invariant subsystem and the $y$-$z$ invariant subsystem, respectively.  We will use these two parameters to classify the dynamics of \eqref{foodchain} systematically in Table \ref{table1}. Let us present a scenario planning to help us understanding the classifications. First, we show that (Proposition \ref{yzdieout}) if $\lambda_1\ge 1$ then the prey-$x$-only state is global asymptotically stable (GAS) in $\mathbb{R}^3_+$. Otherwise, if $(1 - a_1)/2 < \lambda_1 < 1$ ($x = \lambda_1$ intersects with the falling part of $x$-isocline of $x$-$y$ subsystem), then the coexistence steady state of $x$-$y$ subsystem is globally stable in $x$-$y$ plane. In addition, if 0<$\lambda_1<(1 - a_1)/2$ ($x = \lambda_1$ intersects with the rising part of $x$-isocline of $x$-$y$ subsystem), then the limit cycle is globally stable in $x$-$y$ plane. The key point is what is the dynamics when the top-predator interplays with $x$-$y$ predator prey system? We try to answer this question by the almost necessary and sufficient conditions (\eqref{zdieout-periodic} and \eqref{zsurvive}) of existence of top-predator $z$.

Next, some known global results of system \eqref{foodchain} are reviewed. In 1977, Freedman and Walterman \cite{Freedman:1977vw} show the uniformly persistence of \eqref{foodchain} for the parameters when the boundary invariant subspace, the $x$-$y$ plan, has only the saddle equilibrium without limit cycle. Chiu and Hsu \cite{Chiu:1998}, in 1998, show by Lyapunov method the global asymptotic stability of the boundary equilibrium with extinction of top-predator $z$. In this work, we not only analytically show the previous results \cite{Chiu:1998} again by elementary differential inequalities, but we also prove a novel global result, the global stability of the $x$-$y$ periodic solution with the extinction of the top predator $z$, by the similar method.

When the top-predator $z$ survives, the dynamics of system \eqref{foodchain} is clearly more complex. Logically, there are two cases to be considered where there exists either the stable equilibrium or the stable limit cycle on the boundary $x$-$y$ plane. For the first case, we show that the existence of the positive equilibrium implies the instability of the boundary equilibrium on $x$-$y$ plane by linear method. On the other hand, if there exists a limit cycle on the boundary of the $x$-$y$ plane then the situation is different. Numerically, we find the boundary limit cycle and the positive equilibrium can coexist where we call this phenomenon the point-cycle bi-stability. Even more, if the positive equilibrium is unstable and bifurcates an interior periodic solution then the so-called cycle-cycle bi-stability happens by numerical observations. Parameters of some interesting investigations \cite{Hastings:1991tv, Mccann1995, Kuznetsov1996, Boer:1999gh, Kuznetsov:2001ee} about Hopf bifurcations, homoclinic/hetroclinic bifurcations and chaos for \eqref{foodchain} belong to this case.
The bi-stability phenomena are not rare in ecological models. For example, it is well known that bi-stability occurs in the two competitive model, and recently it is also found in the intraguild predator model \cite{Hsu:2015}. Conventionally, the bi-stability is the so-called point-point bi-stability, which means that there are two boundary stable equilibria separated by an interior saddle equilibrium. However, in this article, we numerically find the point-cycle bi-stability and cycle-cycle bi-stability, that is, we find a stable equilibrium point/cycle and a stable cycle exist simultaneously. The solution of \eqref{foodchain} will approach the stable equilibrium or the stable limit cycle dependent on the initial points. 


Our contributions for this work are following. First, two key parameters $\lambda_1$ and $\lambda_2$ are introduced to completely classify all dynamics of models \eqref{foodchain}. Second, two global stability of boundary equilibrium and one global stability of boundary limit cycle are showed analytically when the top predator is extinct. Thirdly, based on the complete classification, all cases are generically performed numerical simulations beside the proved cases analytically, and the new point-cycle and cycle-cycle bi-stabilities are discovered. Finally, a brief discussion and biological implications are given.

The remainder of this article is organized as follows. In Section 2, we make two assumptions of system \eqref{foodchain} based on two global extinction results. Then we recall all well known results of two-dimensional predator-prey systems. In Section 3, with parameters $\lambda_1$ and $\lambda_2$, the local stabilities of all boundary equilibria in $\mathbb{R}^3$ are discussed and classified, and we also obtain the necessary and sufficient conditions to guarantee the existence and multiplicity of the positive equilibrium. The local stability of coexistence is investigated by the Routh-Hurwitz criterion. With extinction of the top predator, two global stabilities of the boundary equilibrium or the boundary cycle are established by differential inequality and computing the Floquet multipliers. In final section, some numerical simulations are performed for the cases without proof, and a brief discussion as well as some biological interpretations are given.

\section{Preliminary Results}

First of all, we can easily see that the solutions of \eqref{foodchain} with non-negative/positive initial conditions are non-negative/positive. With biological meaningful, the state space of \eqref{foodchain} are restricted on the positive octant,
\[\mathbb{R}^3_+=\left\{(x, y, z)\in\mathbb{R}^3 : x>0, y> 0, z>0\right\}.\]
 Moreover, it can be showed \cite{Freedman:1985eg}, by comparison principle, that all solutions of \eqref{foodchain} initiating in $\mathbb{R}^3_+$ are bounded and eventually enter the attracting set
\begin{align}\label{boundedness}
\left\{(x, y, z)\in\mathbb{R}^3_+: x\le 1,  x+y\le 1+\frac{1}{4d_1},  x+y+z\le 1+\frac{1}{4d_1}+\frac{1}{4d_2}\right\}.
\end{align}

We now present a global extinction result which means that if species $y$ {\it cannot} overcome its natural death rate by getting benefit from species $x$ then it will die out eventually. Consequently, so is species $z$. The proof can be obtained easily by differential inequality, so we omit it.
\begin{lemma}\label{m1d1condition}
If $d_1 \ge m_1$ then $\lim_{t\to\infty}y(t)=0$ and $\lim_{t\to\infty}z(t)=0$
\end{lemma}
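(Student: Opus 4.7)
The plan is to use the natural upper bounds on the right-hand sides of the $y$ and $z$ equations, together with the eventual boundedness \eqref{boundedness}, to produce a clean exponential-decay differential inequality for each species in turn.

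First I would work on $y$. Dropping the non-positive Holling term coming from the top predator (since $y,z\ge 0$ and $a_2+y>0$), the $y$-equation gives
\begin{equation*}
\frac{dy}{dt}\;\le\; y\!\left(-d_1+\frac{m_1 x}{a_1+x}\right).
\end{equation*}
By \eqref{boundedness}, there exists $T>0$ such that $x(t)\le 1$ for all $t\ge T$. Since the map $x\mapsto x/(a_1+x)$ is increasing, for $t\ge T$,
\begin{equation*}
-d_1+\frac{m_1 x(t)}{a_1+x(t)}\;\le\;-d_1+\frac{m_1}{a_1+1}\;=\;\frac{m_1-d_1(a_1+1)}{a_1+1}\;\le\;\frac{-d_1 a_1}{a_1+1}\;<\;0,
\end{equation*}
where the penultimate inequality uses the hypothesis $m_1\le d_1$. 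Setting $\alpha:=d_1a_1/(a_1+1)>0$, Gronwall's inequality yields $y(t)\le y(T)e^{-\alpha(t-T)}$ for $t\ge T$, hence $y(t)\to 0$ exponentially.

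Next I would handle $z$ by the same idea. From the $z$-equation and the positivity of $z$,
\begin{equation*}
\frac{dz}{dt}\;=\;z\!\left(-d_2+\frac{m_2 y}{a_2+y}\right).
\end{equation*}
Given $y(t)\to 0$, for any $\varepsilon>0$ (for instance $\varepsilon$ chosen so that $m_2\varepsilon/(a_2+\varepsilon)<d_2/2$) there is $T'>T$ with $y(t)\le \varepsilon$ for $t\ge T'$. Monotonicity of $y\mapsto y/(a_2+y)$ then gives $dz/dt\le -(d_2/2)z$ on $[T',\infty)$, so $z(t)\to 0$ exponentially by Gronwall again.

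I do not expect any real obstacle here: the proof is a two-step differential-inequality argument, and the only mildly delicate point is extracting a strictly negative constant in the upper bound for $(dy/dt)/y$ when $d_1=m_1$ exactly. That is handled by using the eventual bound $x\le 1$, which forces $m_1 x/(a_1+x)\le m_1/(a_1+1)<m_1=d_1$ and thus restores a positive decay rate. All other steps (boundedness, comparison) are standard and already cited in the paper.
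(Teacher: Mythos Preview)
Your proof is correct and follows exactly the approach the paper indicates: the paper omits the argument entirely, stating only that ``the proof can be obtained easily by differential inequality,'' which is precisely the two-step Gronwall/comparison argument you wrote out. Your handling of the equality case $d_1=m_1$ via the eventual bound $x(t)\le 1$ is the right way to secure a strictly negative decay rate.
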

Actually, we would like to show a stronger result in the following proposition.
\begin{proposition}\label{yzdieout}
If $d_1\ge\frac{m_1}{a_1+1}$, then
\[ \lim_{t\to\infty} y(t)=0 \ \text{ and }\ \lim_{t\to\infty} z(t)=0.\]
Furthermore, the equilibrium $E_x$ is globally asymptotically stable, in short, GAS.
\end{proposition}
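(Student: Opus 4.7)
The plan is to prove the proposition by constructing a single Lyapunov function that handles both the strict and the boundary versions of the hypothesis $d_1 \ge m_1/(a_1+1)$ uniformly, and then invoking LaSalle's invariance principle on the absorbing set in \eqref{boundedness}. Note that when $m_1 > d_1$ this condition is equivalent to $\lambda_1 \ge 1$, while the range $d_1 \ge m_1$ is already covered by Lemma \ref{m1d1condition}; the Lyapunov route dispatches all of these simultaneously and also yields both Lyapunov stability and global attraction of $E_x = (1,0,0)$ at once.

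First, I would introduce
\[ V(x,y,z) = A\bigl(x - 1 - \ln x\bigr) + y + \frac{z}{m_2}, \qquad A = \frac{m_1 a_1}{a_1 + 1}, \]
which is smooth and proper on $\mathbb{R}^3_+$, nonnegative, and vanishes only at $E_x$ because $\phi(x) = x - 1 - \ln x$ is strictly convex with minimum zero at $x=1$. Computing $\dot V$ along \eqref{foodchain}, the cross-term $-A(x-1)y/(a_1+x)$ coming from $\phi'(x)\dot x$ combines with the gain term $m_1 xy/(a_1+x)$ inside $\dot y$, and the algebraic identity
\[ \frac{(m_1 - A)x + A}{a_1 + x} \;=\; \frac{m_1}{a_1+1}, \]
which holds precisely for the chosen $A$, collapses the $y$-coefficient to a constant. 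The weight $1/m_2$ on $z$ is in turn chosen so that the predation loss $-yz/(a_2+y)$ in $\dot y$ cancels exactly against the gain in $\dot z/m_2$. After simplification,
\[ \dot V \;=\; -A(x-1)^2 \;-\; \Bigl(d_1 - \frac{m_1}{a_1+1}\Bigr)\, y \;-\; \frac{d_2}{m_2}\, z \;\le\; 0 \]
on $\mathbb{R}^3_+$ by hypothesis.

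Finally I would apply LaSalle's invariance principle inside the absorbing set of \eqref{boundedness}. In the strict case $d_1 > m_1/(a_1+1)$, the set $\{\dot V = 0\}$ reduces to $\{E_x\}$ immediately. In the borderline case $d_1 = m_1/(a_1+1)$, the set $\{\dot V = 0\}$ becomes the half-line $\{(1,y,0) : y \ge 0\}$; on that half-line the first equation forces $\dot x = -y/(a_1+1)$, so invariance requires $y = 0$ and the largest invariant subset is again $\{E_x\}$. Together with the Lyapunov stability that $V$ supplies at $E_x$, this yields global asymptotic stability of $E_x$ in $\mathbb{R}^3_+$, and the extinction statements $y(t)\to 0$ and $z(t)\to 0$ are immediate corollaries.

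The main obstacle I anticipate is the algebraic ansatz itself: one must identify both the coefficient $A = m_1 a_1/(a_1+1)$ and the weight $1/m_2$ that make the two cross-terms $(x-1)y/(a_1+x)$ and $yz/(a_2+y)$ vanish simultaneously. An elementary differential-inequality alternative would use $\limsup x(t) \le 1$ from $\dot x \le x(1-x)$ to force exponential decay of $y$ in the strict case and then deduce $z\to 0$ and $x\to 1$ by successive perturbation; but the equality case $d_1 = m_1/(a_1+1)$ then requires an extra bootstrap step, namely showing $\limsup x < 1$ whenever $\liminf y > 0$ in order to rule out a nonzero limit of $y$, which the Lyapunov construction avoids in a single stroke.
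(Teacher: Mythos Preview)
Your proof is correct and takes a genuinely different route from the paper. The paper argues by direct differential inequality: once $x(t)\le 1$, the strict hypothesis $d_1>m_1/(a_1+1)$ gives $\dot y/y\le -\mu_1<0$ and hence $y\to 0$; the boundary case $d_1=m_1/(a_1+1)$ is handled separately by first noting that $y$ is nonincreasing, assuming $y\to\xi>0$, and then splitting into subcases on the sign of $-d_2+m_2\xi/(a_2+\xi)$ to reach a contradiction (in one subcase $z$ is eventually increasing, forcing $y\to 0$; in the other $z\to 0$ and one deduces $x\to 1$, contradicting the first equation). Your single Lyapunov function $V=A(x-1-\ln x)+y+z/m_2$ with $A=m_1a_1/(a_1+1)$ collapses both regimes at once: the key algebraic identity $(m_1-A)x+A=\tfrac{m_1}{a_1+1}(a_1+x)$ is exactly what makes the $y$--coefficient constant, and LaSalle then isolates $E_x$ even in the degenerate case via the observation that $\dot x=-y/(a_1+1)$ on $\{x=1,\,z=0\}$. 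What your approach buys is uniformity and brevity, plus Lyapunov stability of $E_x$ for free; what the paper's approach buys is that it stays entirely within elementary comparison arguments, consistent with the differential-inequality philosophy it advertises and later reuses for Theorem~\ref{global-results-1} and Lemma~\ref{z-dieout}. Both are complete; yours is the cleaner of the two for this particular statement.
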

\begin{proof}
We only show that if $d_1\ge\frac{m_1}{a_1+1}$, then $\lim_{t\to\infty}y(t)=0$. Since the extinction of species $y$ implies extinction of species $z$ and the global stability of species $x$, consequently.

We may assume $x(t)\leq1$ for $t$ large enough without loss of generality. Consider the case $d_1>\frac{m_1}{a_1+1}$, and let $\mu_1 = d_1-\frac{m_1}{a_1+1}>0$. Then, by differential inequality, we obtain
\begin{align*}
\frac{\dot y(t)}{y(t)}= -d_1+\frac{m_1x}{a_1+x}-\frac{z}{a_2+y}\leq -d_1+\frac{m_1x}{a_1+x}\leq -d_1+\frac{m_1}{a_1+1}=-\mu_1.
\end{align*}
This inequality implies $\lim_{t\to\infty} y(t)=0$ which implies $\lim_{t\to\infty} z(t)=0$, and system \eqref{foodchain} will asymptotically approach the limiting system \eqref{1d} by the Markus limiting theorem \cite{Markus:1956}. Hence the equilibrium $E_x$ is globally asymptotically stable.

For the case $d_1=\frac{m_1}{a_1+1}$,
\begin{align*}
\dot y= y\Big(-d_1+\frac{m_1x}{a_1+x}-\frac{z}{a_2+y}\Big)\leq y(-d_1+\frac{m_1x}{a_1+x})\leq y(-d_1+\frac{m_1}{a_1+1})=0,
\end{align*}
so $y(t)$ is monotone decreasing. Suppose that $\lim_{t\to\infty}y(t)=\xi>0$, we would like to get a contradiction. By the third equation of \eqref{foodchain}, we have
\begin{align*}
\frac{\dot z(s)}{z(s)}&=-d_2+\frac{m_2y(s)}{a_2+y(s)}\ge -d_2+\frac{m_2\xi}{a_2+\xi},
\end{align*}
which implies that $z(t)\ge z(0)e^{(-d_2+\frac{m_2\xi}{a_2+\xi})t}$. By this inequality, we can see that the inequality $-d_2+\frac{m_2\xi}{a_2+\xi}\le 0$ should be true easily. Otherwise, $z$ will be unbounded which is a contradiction.

 If $-d_2+\frac{m_2\xi}{a_2+\xi}=0$, then it is easy to show that $z$ is monotone increasing. Then
\begin{align*}
\frac{\dot y(t)}{y(t)} & = -d_1+\frac{m_1x}{a_1+x}-\frac{z}{a_2+y}\\
& \leq -d_1+\frac{m_1}{a_1+1}-\frac{z}{a_2+y}= -\frac{z}{a_2+y}\le-\frac{z(0)}{a_2+y(0)}<0,
\end{align*}
which implies $\lim_{t\to\infty}y(t)=0$ and contradicts to $\lim_{t\to\infty}y(t)=\xi>0$.

On the other hand, if $-d_2+\frac{m_2\xi}{a_2+\xi}< 0$, then 
\begin{align*}
\limsup_{t\to\infty}\frac{\dot z(t)}{z(t)}=-d_2+\limsup_{t\to\infty}\frac{m_2y(t)}{a_2+y(t)}=-d_2+\frac{m_2\xi}{a_2+\xi}<0,
\end{align*}
which implies that $\dot z(t)/z(t)$ is less than a negative constant for time large enough. Therefore, we have $\lim_{t\to\infty}z(t)=0$. On the other hand, we also have $\lim_{t\to\infty}\dot y(t)=0$, since $y(t)$ is monotone decreasing to the constant $\xi$. By taking limit of both sides of the second equation of \eqref{foodchain},  we obtain
\[\lim_{t\to\infty}\frac{m_1x(t)}{a_1+x(t)}=d_1 \quad\text{ and  }\quad \lim_{t\to\infty}x(t)=\frac{a_1d_1}{m_1-d_1}=1.\]
Then a contradiction can be obtained by taking limit of both sides of the first equation of system \eqref{foodchain}, and we complete the proof.
\end{proof}

Similarly, by the monotonicity of function $\frac{y}{a_2+y}$, if $d_2\ge m_2$ then we have
\begin{align*}
\frac{\dot z}{z}=-d_2+\frac{m_2y}{a_2+y}\le-d_2+\frac{m_2y_M}{a_2+y_M}<0
\end{align*}
where $y_M=\max_{t\ge0} y(t)$. Consequently, we also have the following lemma.
\begin{lemma} If $d_2\ge m_2$ then $z(t)$ approaches 0 as $t$ approaches to $\infty$.\label{zdieout}
\end{lemma}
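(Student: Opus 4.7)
The plan is to exploit the boundedness of $y(t)$ together with the strict monotonicity of the saturating functional $g(y) = \frac{y}{a_2+y}$ to produce a differential inequality for $z$ that forces exponential decay.

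First I would invoke the attracting set \eqref{boundedness} to conclude that $y(t)$ is bounded; concretely, for any $\varepsilon>0$ the solution eventually satisfies $y(t) \le 1+\frac{1}{4d_1}+\varepsilon =: y_M$. Then $g(y(t)) \le g(y_M) = \frac{y_M}{a_2+y_M} < 1$, because $g$ is increasing on $[0,\infty)$ and $g(y)<1$ for all finite $y$. Substituting this upper bound into the third equation of \eqref{foodchain} gives, for $t$ large,
\begin{equation*}
\frac{\dot z(t)}{z(t)} \;=\; -d_2 + \frac{m_2 y(t)}{a_2+y(t)} \;\le\; -d_2 + \frac{m_2 y_M}{a_2+y_M}.
\end{equation*}

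Next I would use the hypothesis $d_2 \ge m_2$ to rewrite the right-hand side as
\begin{equation*}
-d_2 + \frac{m_2 y_M}{a_2+y_M} \;\le\; -m_2 + \frac{m_2 y_M}{a_2+y_M} \;=\; -\frac{m_2 a_2}{a_2+y_M} \;=:\; -\mu_2 < 0,
\end{equation*}
so $\dot z/z$ is bounded above by a strictly negative constant once $t$ is large. Integrating yields $z(t) \le z(t_0)\,e^{-\mu_2 (t-t_0)}$ for $t \ge t_0$, and hence $\lim_{t\to\infty} z(t) = 0$.

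There is essentially no hard step here; the only subtlety is that $y_M = \max_{t\ge 0} y(t)$ as written in the pre-statement hint need not be attained, so I would replace it by the eventual bound from \eqref{boundedness} (or by $\sup_{t \ge t_0} y(t)$ for $t_0$ large), which is why I phrased the argument for $t$ large enough. The argument is entirely a differential-inequality estimate, parallel to the proof of Lemma \ref{m1d1condition}, and requires no use of the Markus limiting theorem or of the $\lambda_i$ framework.
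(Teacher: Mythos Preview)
Your argument is correct and follows essentially the same route as the paper: bound $\dot z/z$ above by $-d_2 + m_2 y_M/(a_2+y_M)$ using the monotonicity of $y\mapsto y/(a_2+y)$, then observe this constant is strictly negative when $d_2\ge m_2$. Your version is in fact slightly more careful than the paper's inline argument, since you justify the negativity explicitly via $-m_2 a_2/(a_2+y_M)$ and address the issue of whether $y_M$ is actually attained.
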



By preceding two results, system \eqref{foodchain} will be reduced to a one-dimensional or two-dimensional subsystem for time large enough if $d_1\ge\frac{m_1}{a_1+1}$ or $d_2\ge m_2$, respectively. Hence it is natural to assume that $d_1<\frac{m_1}{a_1+1}$ ( which implies $d_1<m_1$) and $d_2< m_2$ for avoiding these trivialities. In addition, it is clear that $d_1<\frac{m_1}{a_1+1}$ is equivalent to 
\begin{align}\label{lambda1}
0<\lambda_1\equiv\frac{a_1d_1}{m_1-d_1}<1,
\end{align}
where $\lambda_1$ is the first key parameters defined in \cite{Hsu:1978vk}.
Therefore, in the remainder of this work,  let us make these two assumptions,
\begin{description}
\item[{\bf (A1)}] $0<\lambda_1<1$,
\item[{\bf (A2)}] $\displaystyle d_2< m_2$.
\end{description}

\subsection{Dynamics of \eqref{foodchain} on Invariant Subspaces}

It is clear that the system \eqref{foodchain} has three invariant subspaces, $H_1=\{(x,0,0):x\geq0\}$, $H_2=\{(x,y,0):x\ge0,y\ge0\}$, and $H_3=\{(0,y,z):y\ge 0,z\ge0\}$. Furthermore, three boundary equilibria, $E_0=(0, 0, 0)$, $E_x=(1, 0, 0)$ and $E_{xy}=(\bar x_*, \bar y_*, 0)$, can be easily obtained if the assumption {\bf (A1)} holds, where $\bar x_*=\lambda_1$ and $\bar y_*=p(\lambda_1)$ with
\begin{align}\label{poly}
p(x)\equiv (1-x)(a_1+x).
\end{align}
We list all well known global results of system \eqref{foodchain} on these subspaces.
\begin{enumerate}[(i)]
\item On $H_1$, system \eqref{foodchain} is actually one-dimensional system 
\begin{align}\label{1d}
\dot x=x(1-x).
\end{align} 
The equilibrium $E_0$ is unstable, and the equilibrium $E_x$ is GAS on $H_1$.
\item On $H_3$, the equilibrium $E_0=(0,0,0)$  is GAS.
\item On $H_2$, system \eqref{foodchain} can be reduced to the following two-dimensional subsystem
\begin{align}\label{subsystem}
\begin{cases}
\frac{dx}{dt}=x(1-x-\frac{y}{a_1+x}), \\
\frac{dy}{dt}=y(-d_1+\frac{m_1x}{a_1+x}).
\end{cases}
\end{align}
Similarly, on $H_2$, the equilibrium $E_0$ is unstable, and $E_x$ are unstable if $0<\lambda_1<1$.
%
Furthermore, the Jacobian matrix evaluated at $E_{xy}=(\bar x_*,\bar y_*)$  can be obtained by direct computations,
\begin{align*}
A(\bar x_*, \bar y_*)=
\begin{bmatrix}
-\bar x_*+\frac{\bar x_*\bar y_*}{(a_1+\bar x_*)^2} & -\frac{\bar x_*}{a_1+\bar x_*}\\
\frac{a_1m_1\bar y_*}{(a_1+\bar x_*)^2} & 0
\end{bmatrix},
\end{align*}
and the characteristic equation of $A(\bar x_*,\bar y_*)$ is
\[ \lambda^2-\bar x_*(-1+\frac{\bar y_*}{(a_1+\bar x_*)^2})\lambda+\frac{a_1m_1\bar x_*\bar y_*}{(a_1+\bar x_*)^3}=0.\]
Therefore, the equilibrium $E_{xy}$ is locally asymptotically stable on $H_2$, and it is actually global asymptotically stable (GAS) \cite{Hsu:1978by} if
$$-1+\frac{\bar y_*}{(a_1+\bar x_*)^2}<0$$
which is equivalent to
\begin{align}\label{2dGAS}
\bar x_*=\lambda_1>\frac{1-a_1}{2}.
\end{align}
For $a_1\ge1$, it is clear that \eqref{2dGAS} is always true, hence $E_{xy}$ is GAS on $H_2$. However, for $0<a_1<1$ and $\lambda_1=\frac{1-a_1}{2}$, system \eqref{subsystem} happens Hopf bifurcation. If $0<a_1<1$ and $\lambda_1<\frac{1-a_1}{2}$ then $E_{xy}$ becomes an unstable spiral, and there exists a uniqueness stable limit cycle \cite{Cheng:1981ga}.
\end{enumerate}
We summarize all well known one- and two-dimensional results \cite{Bulter:1983, Cheng:1981ga, Cheng:1981} in the following proposition.
\begin{proposition}\label{2dresults}
\begin{enumerate}[\rm (i)]
\item The trivial equilibrium $E_0$ is saddle on $H_1$ and GAS on $H_3$.
\item On $H_2$, the semi-trivial equilibrium $E_x$ is GAS  if $\lambda_1\ge 1$, and it is saddle  if $0<\lambda_1<1$.
\item The equilibrium $E_{xy}$ exists uniquely if and only if $0<\lambda_1<1$, and it is GAS on $H_2$ if \eqref{2dGAS} holds. In particular, for $0<a_1<1$, if $\lambda_1<(1-a_1)/2$, the equilibrium $E_{xy}$ is an unstable focus on the $x$-$y$ plane, and it is surrounded by a unique stable limit cycle $\Gamma$.
\end{enumerate}
\end{proposition}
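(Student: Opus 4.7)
The plan is to address parts (i)--(iii) by restricting attention to each invariant subspace in turn and combining linearisation with classical two--dimensional phase plane tools.

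For (i), on $H_1$ the system reduces to the scalar logistic equation \eqref{1d}, whose elementary dynamics give $E_x$ as a globally attracting rest point inside $H_1$; reading the full Jacobian of \eqref{foodchain} at the origin yields eigenvalues $+1,-d_1,-d_2$, which identifies $E_0$ as a saddle in $\mathbb{R}^3$. On $H_3$, setting $x\equiv 0$ reduces the $y$--equation to $\dot y = y\bigl(-d_1 - z/(a_2+y)\bigr)\le -d_1 y$, so $y(t)\to 0$ exponentially. Once $y$ is small enough, the estimate $\dot z/z \le -d_2 + m_2 y/(a_2+y)$ is eventually bounded above by a negative constant, forcing $z(t)\to 0$ and giving the global attractivity of $E_0$ on $H_3$.

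For (ii), the Jacobian of the planar subsystem \eqref{subsystem} at $E_x=(1,0)$ is upper triangular with eigenvalues $-1$ and $-d_1 + m_1/(a_1+1)$; the latter is non--positive iff $\lambda_1\ge 1$ (stable node) and positive iff $\lambda_1<1$ (saddle). For global attractivity when $\lambda_1\ge 1$, I would apply the argument of Proposition \ref{yzdieout} with $z\equiv 0$: the inequality $\dot y/y \le -d_1 + m_1/(a_1+1)\le 0$ forces $y(t)$ to decrease monotonically to some limit $\xi\ge 0$, and a Markus limiting--equation argument on \eqref{1d} rules out $\xi>0$ and yields $x(t)\to 1$.

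For (iii), the nullclines give $x=\lambda_1$ from $\dot y=0$ (with $y>0$) and $y=p(x)$ from $\dot x=0$ (with $x>0$), so $E_{xy}=(\lambda_1,p(\lambda_1))$ lies in the open positive quadrant precisely when $0<\lambda_1<1$, which simultaneously settles existence and uniqueness. Local asymptotic stability follows directly from the Jacobian $A(\bar x_*,\bar y_*)$ displayed in the excerpt: substituting $\bar y_* = (1-\bar x_*)(a_1+\bar x_*)$ simplifies the trace to $\bar x_*\bigl((1-\bar x_*)/(a_1+\bar x_*) - 1\bigr)$, which is negative exactly when $\bar x_*>(1-a_1)/2$, while the determinant $a_1 m_1 \bar x_*\bar y_*/(a_1+\bar x_*)^3$ is always positive. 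Global stability on $H_2$ under \eqref{2dGAS} I would obtain by invoking the Lyapunov construction of Hsu \cite{Hsu:1978by}; when $0<a_1<1$ and $\lambda_1<(1-a_1)/2$ the equilibrium is a repelling focus, so Poincar\'e--Bendixson together with the absorbing region inherited from \eqref{boundedness} produces a limit cycle, whose uniqueness and asymptotic stability is the classical result of Cheng \cite{Cheng:1981ga}.

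The main obstacle: everything outside of the two claims ``$E_{xy}$ is GAS on $H_2$ under \eqref{2dGAS}'' and ``the limit cycle is unique and stable when \eqref{2dGAS} fails'' is a mechanical exercise in linear algebra or in scalar comparison. Those two statements, however, are genuinely non--trivial planar theorems whose proofs (via an explicit Lyapunov function and via a Dulac/energy sign argument, respectively) I would cite rather than reproduce.
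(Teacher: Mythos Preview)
Your proposal is correct and matches the paper's treatment: the proposition is presented there as a summary of classical results, with the preceding discussion carrying out exactly the Jacobian/trace--determinant computations you give and deferring the two genuinely hard planar claims (global stability of $E_{xy}$ under \eqref{2dGAS}, and uniqueness of the limit cycle when it fails) to the same references \cite{Hsu:1978by,Cheng:1981ga}. Your handling of $H_3$ via the differential inequality $\dot y\le -d_1 y$ is slightly more explicit than the paper's bare assertion, but otherwise the arguments coincide.
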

%


\section{Local and Global Dynamics in $\mathbb{R}^3_+$}
In this section, we investigate local and some global dynamics of \eqref{foodchain} with positive initial conditions. 
First of all, the local stabilities of these three boundary equilibria of system \eqref{foodchain}, $E_0$, $E_x$ and $E_{xy}$, are investigated. Then the existence of positive equilibria $E_*$ will be showed by introducing another key parameter $\lambda_2$ with some constrains, and its corresponding stability is verified by Routh-Hurwitz criterion. Furthermore, complete classifications of dynamics of \eqref{foodchain} with respective to parameters, $\lambda_1$ and $\lambda_2$, are given. Finally, the global stabilities of the boundary equilibrium $E_{xy}$ and the boundary periodic solution $\Gamma$ are proved under different conditions. 




\subsection{Local Stability of Boundary Equilibria in $\mathbb{R}^3$}

It is easy to obtain Jacobian matrix of \eqref{foodchain}, 
\begin{align*}
A(x,y,z)=
\left[
\begin{array}{ccc}
1-2x-\frac{a_1y}{(a_1+x)^2}, & -\frac{x}{a_1+x}, & 0 \\
\frac{a_1m_1y}{(a_1+x)^2}, & -d_1+\frac{m_1x}{a_1+x}-\frac{a_2z}{(a_2+y)^2} , & -\frac{y}{a_2+y}\\
0, &\frac{a_2m_2z}{(a_2+y)^2} , & -d_2+\frac{m_2y}{a_2+y}
\end{array}
\right],
\end{align*}
by direct computations.
\begin{enumerate}[(i)]
\item For equilibrium $E_0=(0,0,0)$:
The Jacobian matrix evaluated at $E_0$ is
\begin{align*}
A(E_0)=
\left[
\begin{array}{rrr}
1, & 0, &  0 \\
0, & -d_1, &  0 \\
0, & 0, & -d_2
\end{array}
\right].
\end{align*}
Hence $E_0$ is a saddle point with two-dimensional stable subspace $H_3$ and one-dimensional unstable subspace $H_1$.
\item For equilibrium $E_x=(1,0,0)$ : The Jacobian matrix evaluated at $E_x$ is
\begin{align*}
A(E_x)=
\left[
\begin{array}{ccc}
-1, & -\frac{1}{a_1+1}, &  0  \\
0, & \frac{m_1}{a_1+1}-d_1, & 0  \\
0, & 0, & -d_2
\end{array}
\right].
\end{align*}
It is easy to see that $A(E_x)$ have two negative eigenvalues, $-1$ and $-d_2$, with the $x$-axis and $z$-axis as their eigen-subspace, respectively. Moreover, there is a positive eigenvalue, $\frac{m_1}{a_1+1}-d_1=\frac{m_1-d_1}{a_1+1}(1-\lambda_1)$, because of assumption {\bf (A1)}. Hence $E_x$ is a saddle point.

\item For equilibrium $E_{xy}=(\bar x_*, \bar y_*, 0)$:
The Jacobian matrix evaluated at $E_{xy}$ is
\begin{align*}
A(E_{xy})=
\left[
\begin{array}{ccc}
-\bar x_*+\frac{\bar x_*\bar y_*}{(a_1+\bar x_*)^2}, & -\frac{\bar x_*}{a_1+\bar x_*}, & 0 \\
\frac{a_1m_1\bar y_*}{(a_1+\bar x_*)^2}, & 0, & -\frac{\bar y_*}{a_2+\bar y_*}\\
0,  &  0, & -d_2+\frac{m_2\bar y_*}{a_2+\bar y_*}
\end{array}
\right]
\end{align*}
with its characteristic polynomial,
\[ \left(\lambda+d_2-\frac{m_2\bar y_*}{a_2+\bar y_*}\right)\left(\lambda^2-\big(-\bar x_*+\frac{\bar x_*\bar y_*}{(a_1+\bar x_*)^2}\big)\lambda+\frac{m_1a_1\bar x_*\bar y_*}{(a_1+\bar x_*)^3}\right)=0. \]
Hence $E_{xy}$ is asymptotically stable if and only if
\begin{align}\label{Exystable}
-\bar x_*+\frac{\bar x_*\bar y_*}{(a_1+\bar x_*)^2}<0 \mbox{ \quad and \quad}
-d_2+\frac{m_2\bar y_*}{a_2+\bar y_*}<0.
\end{align}
Using the equality, $\bar y_*=(1-\bar x_*)(a_1+\bar x_*)$, the first inequality is equivalent to
\begin{align*}
\bar x_*=\lambda_1>\frac{1-a_1}{2}
\end{align*}
which is the same as the case (iii) of two-dimensional subsystem \eqref{subsystem} in Proposition \ref{2dresults}. The second inequality of \eqref{Exystable} is equivalent to
\begin{align}\label{ExyStable-2}
\bar y_*=p(\bar x_*)=p(\lambda_1)<\frac{a_2d_2}{m_2-d_2 }\equiv\lambda_2
\end{align}
where the quadratic polynomial $p(x)$ is defined in \eqref{poly}.
\end{enumerate}
Let us summarize all local stabilities of boundary equilibria as follows.
\begin{proposition}  \label{boundaryequilibria} Let assumptions {\rm \bf(A1)} and {\rm \bf(A2)} hold.
\begin{enumerate}[{\rm (i)}]
\item The trivial equilibrium $E_0$ is a saddle point with two-dimensional stable subspace $H_3$ and one-dimensional unstable subspace $H_1$.
\item The equilibrium $E_x$ is a saddle point with the $x$-axis and $z$-axis as its stable subspace and unstable eigenvector pointed to interior of first octant.
\item The equilibrium $E_{xy}$ is asymptotically stable if inequalities \eqref{2dGAS} and \eqref{ExyStable-2} hold.
\end{enumerate}
\end{proposition}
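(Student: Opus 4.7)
The plan is to compute the Jacobian $A(x,y,z)$ at each of the three boundary equilibria and read off the eigenvalues directly, exploiting the block-triangular structure that arises because one or more coordinates vanish at each equilibrium. No Lyapunov function or center-manifold argument is needed: everything reduces to linear algebra once the Jacobian is written down.

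For $E_0$ the matrix $A(E_0)$ is diagonal with entries $1,-d_1,-d_2$, so the eigenvectors are the coordinate axes and one concludes immediately that $E_0$ is a saddle whose unstable subspace is $H_1$ and stable subspace is $H_3$. For $E_x$ the matrix $A(E_x)$ is upper triangular with diagonal entries $-1$, $\tfrac{m_1}{a_1+1}-d_1$, and $-d_2$; assumption \textbf{(A1)} is exactly $d_1<\tfrac{m_1}{a_1+1}$, which can be rewritten $\tfrac{m_1}{a_1+1}-d_1=\tfrac{m_1-d_1}{a_1+1}(1-\lambda_1)>0$, so this eigenvalue is positive while the other two are negative. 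The stable subspace is therefore the $xz$-plane; computing the eigenvector associated with the positive eigenvalue and checking that its $y$-component is positive confirms that the unstable manifold points into the open octant $\mathbb{R}^3_+$, which is what part (ii) claims.

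For $E_{xy}$ the bottom row of $A(E_{xy})$ has zeros in the first two entries because $\bar z_*=0$, and the top-right $2\times1$ block vanishes because $z=0$, so $A(E_{xy})$ is block lower triangular. One eigenvalue is the scalar $(3,3)$ entry $-d_2+\tfrac{m_2\bar y_*}{a_2+\bar y_*}$, and the remaining two come from the $2\times2$ upper-left block, which is exactly the Jacobian of the planar subsystem \eqref{subsystem} already analyzed in Proposition \ref{2dresults}. Stability of the block is equivalent to $-\bar x_*+\tfrac{\bar x_*\bar y_*}{(a_1+\bar x_*)^2}<0$, and substituting $\bar y_*=(1-\bar x_*)(a_1+\bar x_*)=p(\lambda_1)$ yields precisely \eqref{2dGAS}. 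The lone $z$-direction eigenvalue is negative iff $\bar y_*<\tfrac{a_2d_2}{m_2-d_2}=\lambda_2$, which is \eqref{ExyStable-2}. Combining these two decoupled conditions gives part (iii).

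There is no real obstacle here; the routine is purely computational and has been essentially carried out already in the preceding paragraphs of the paper. The only point that requires care is the algebraic reformulation of the stability inequalities in terms of the key parameters $\lambda_1$ and $\lambda_2$, so that the proposition is stated uniformly in the two threshold quantities that will be used throughout the global analysis. Once this reformulation is recorded, parts (i)--(iii) follow by direct inspection of the spectra computed above.
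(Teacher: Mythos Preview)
Your approach is exactly the paper's: compute the Jacobian at each boundary equilibrium, exploit the triangular structure to read off eigenvalues, and rewrite the sign conditions in terms of $\lambda_1,\lambda_2$. One small slip to fix in your $E_{xy}$ paragraph: the $(2,3)$ entry of $A(E_{xy})$ is $-\bar y_*/(a_2+\bar y_*)\neq 0$, so the top-right $2\times1$ block does \emph{not} vanish; what vanishes is the bottom-left $1\times2$ block (entries $(3,1),(3,2)$, which carry the factor $z$), making $A(E_{xy})$ block \emph{upper} triangular---this still yields the factored characteristic polynomial you want, so your conclusion is unaffected.
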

\subsection{Existence of Coexistence State and its Local Stability}

To find the positive equilibrium $E_*=(x_*,y_*,z_*)$, we should solve the following system
\begin{align}\label{coexistenceq}
0=&1-x-\frac{y}{a_1+x},\nonumber \\
0=& -d_1+\frac{m_1x}{a_1+x}-\frac{z}{a_2+y}, \\
0=& -d_2+\frac{m_2y}{a_2+y}. \nonumber
\end{align}
With assumption {\bf (A2)} and by solving the last equation of \eqref{coexistenceq}, we can easily get $y_*=\lambda_2$ which is defined in \eqref{ExyStable-2}. From the first equation of  \eqref{coexistenceq}, we can obtain $x_*$ by solving the quadratic polynomial
\begin{align}\label{x*equation}
(1-x_*)(a_1+x_*)=p(x_*)=\lambda_2
\end{align}
with the conditions, $x_*<1$ and $y_*=\lambda_2<1+a_1$. Finally, with the preceding $x_*$ and $y_*$ and by solving the second equation of \eqref{coexistenceq}, we obtain
\begin{align}\label{z*formula}
z_*=(\frac{m_1x_*}{a_1+x_*}-d_1)(a_2+y_*)
\end{align}
where $z_*$ is positive if and only if
$x_*>\lambda_1$.
Summarizing the above discussions, the positive equilibrium $E_*$ exists if and only if we can find a positive number $x_*\in(\lambda_1, 1)$ satisfying \eqref{x*equation}. Based on the above discussions, we present a result for existence of positive equilibrium.
\begin{proposition}\label{existence}
Let assumptions  {\bf (A1)} and  {\bf (A2)} hold.
\begin{enumerate}[{\rm (i)}]
\item If $E_*$ exists, then $0<\lambda_2<\min\{1+a_1, \frac{(1+a_1)^2}{4}\}$.
\item If $1\le a_1$, then there is a unique $E_*$ if and only if
$0<\lambda_2\le p(\lambda_1)$.
\item For $0<a_1< 1$, 
\begin{enumerate}[{\rm (a)}]
\item if $\lambda_1\ge\frac{1-a_1}{2}$, then there exists a unique $E_*$ if and only if $0<\lambda_2<p(\lambda_1)$;
\item if $0<\lambda_1<\frac{1-a_1}{2}$, then $E_*$ exists if and only if  $0<\lambda_2\le \frac{(1+a_1)^2}{4}$. Furthermore, $E_*$ is unique either $\lambda_2=\frac{(1+a_1)^2}{4}$ or $\lambda_2\le p(\lambda_1)$, and $E_*$ can be solved exactly with multiplicity two if $p(\lambda_1)<\lambda_2<\frac{(1+a_1)^2}{4}$.
\end{enumerate}
\end{enumerate}
\end{proposition}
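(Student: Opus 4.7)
The plan is to reduce the existence problem to a one-variable root count. Under (A2) the third equation of \eqref{coexistenceq} forces $y_* = \lambda_2$; substituting this into the first equation forces $x_*$ to be a root of $p(x) = \lambda_2$ in $(0,1)$; and \eqref{z*formula} then delivers $z_*$, which is strictly positive precisely when $x_* > \lambda_1$. So the whole question becomes: how many roots of the quadratic $p(x) = \lambda_2$ lie in the open interval $(\lambda_1, 1)$, where $p$ is the downward-opening parabola with zeros at $-a_1$ and $1$ and vertex at $x_v = (1-a_1)/2$ of height $(1+a_1)^2/4$?

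For (i), any positive $E_*$ gives $x_* \in (0,1)$, hence $\lambda_2 = p(x_*) \in \bigl(0, (1+a_1)^2/4\bigr]$; in addition $\lambda_2 = (1-x_*)(a_1+x_*) < a_1+1$ because $x_* < 1$, which yields the claimed $\min\{1+a_1, (1+a_1)^2/4\}$ upper bound. For (ii) and (iii)(a), the hypothesis ($a_1 \ge 1$, resp.\ $\lambda_1 \ge x_v$) places the vertex of $p$ at or to the left of $\lambda_1$, so $p$ is strictly decreasing on $[\lambda_1, 1]$ and maps it bijectively onto $[0, p(\lambda_1)]$. The preimage of $\lambda_2$ lies strictly inside $(\lambda_1, 1)$ exactly when $\lambda_2$ is strictly between $0$ and $p(\lambda_1)$; at the boundary $\lambda_2 = p(\lambda_1)$ the preimage is $x_* = \lambda_1$ itself and the equilibrium degenerates to $E_{xy}$.

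Case (iii)(b), with $0 < \lambda_1 < x_v < 1$, is the combinatorially richer one; I would handle it by tracking the graph of $p$ on $[\lambda_1, 1]$, which rises strictly from $p(\lambda_1)$ to $(1+a_1)^2/4$ on $[\lambda_1, x_v]$ and then falls strictly back to $0$ on $[x_v, 1]$. Intersecting with the horizontal line $y = \lambda_2$ and discarding the boundary point $x = \lambda_1$ gives: no root in $(\lambda_1, 1)$ if $\lambda_2 > (1+a_1)^2/4$; the single tangential root $x_v$ if $\lambda_2 = (1+a_1)^2/4$; two distinct roots, one on each monotone branch, if $p(\lambda_1) < \lambda_2 < (1+a_1)^2/4$; and exactly one root on the decreasing branch if $0 < \lambda_2 \le p(\lambda_1)$. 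Each such root produces a positive equilibrium through \eqref{z*formula}, which reproduces the stated multiplicities.

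The main obstacle is the delicate bookkeeping at the transition values $\lambda_2 = p(\lambda_1)$ and $\lambda_2 = (1+a_1)^2/4$. By Vieta the two roots of $p(x) = p(\lambda_1)$ are $\lambda_1$ and its companion $1 - a_1 - \lambda_1$, and the hypothesis $\lambda_1 < x_v$ in case (iii)(b) is exactly what makes this companion strictly larger than $\lambda_1$ and hence qualify as a genuine coexistence state; in cases (ii) and (iii)(a) the same companion lies at or below $\lambda_1$ and is discarded. Similarly at $\lambda_2 = (1+a_1)^2/4$ the double root $x_v$ contributes in case (iii)(b) because $x_v > \lambda_1$ there, but not in the other cases. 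Keeping this asymmetry between strict and non-strict inequalities straight is really the only delicate step; once it is resolved, the rest is monotonicity.
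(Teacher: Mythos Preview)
Your proof follows essentially the same approach as the paper's: both reduce the existence question to counting roots of the downward parabola $p(x)=\lambda_2$ in the interval $(\lambda_1,1)$ and then argue by monotonicity on the subintervals determined by the vertex $x_v=(1-a_1)/2$, with the paper largely deferring to Figure~\ref{figure1} for case (iii). Your treatment is in fact more thorough---the explicit use of Vieta to locate the companion root $1-a_1-\lambda_1$ at the transition $\lambda_2=p(\lambda_1)$ is a nice touch the paper omits---and your observation that $x_*=\lambda_1$ forces $z_*=0$ (hence no genuine positive equilibrium) at that boundary value actually flags a minor inconsistency in the stated part (ii), where the inequality ought to be strict just as in (iii)(a).
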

\begin{proof}
It is easy to see that the quadratic polynomial $p(x)=(1-x)(a_1+x)$ is concave down with two real roots $1$ and $-a_1$ and maximum $\frac{(1+a_1)^2}{4}$ occurred at $\frac{1-a_1}{2}$. The necessary condition for the existence of $E_*$, the statement of (i), can be verified easily by the above discussion.

The remainder of this proposition, we consider two cases, $1\le a_1$ and $0<a_1<1$. For the case $a_1\ge1$, the function $p(x)$ is monotone decreasing on (0, 1) since $p(x)$ attains its global maxima at $(1-a_1)/2\le 0$. For any given $0<\lambda_1<1$, there is one and only one $x_*\in(\lambda_1, 1)$ such that $p(x_*)=\lambda_2$ if $\lambda_2<p(\lambda_1)$. Please refer the Figure \ref{figure1} (a). This completes the proof of (ii). 

For $0<a_1<1$,  the results of cases $\lambda_1\ge (1-a_1)/2$ and  $\lambda_1< (1-a_1)/2$ can be obtained by similar arguments. Please refer the Figure \ref{figure1} (b) and (c). So we omit the proof of (iii).
\end{proof}

\def\f#1{x^2-(#1)*x+3}
\begin{figure}
\begin{center}
\subfigure[$1\le a_1$]{
\begin{tikzpicture}
	\begin{axis}[
	ticks=none,
	width=190pt,
	axis x line=middle,
	axis y line=center,
	xmin=-2.5,
        xmax=2.5,
        ymin=-0.5,
        ymax=2.5,]
	\addplot+[
	mark=none,
	smooth]
	(\x,{(1-\x)*(1.5+\x)});
	\end{axis}
	\draw[-] (3.557, 1.0) -- (3.557, 0.6)
                      node[below] {$1$};
        \draw[dashed] (2.657, 3.0) -- (2.657, 0.6)
                      node[below] {$\lambda_1$};
        \draw[-] (1.03, 1.0) -- (1.03, 0.6)
                      node[below] {$-a_1$};
        \draw[dashed] (0.5, 2.0) -- (3.857, 2.0)
                      node[right] {$\lambda_2$};
        \draw[dashed, red] (3.1, 2.0) -- (3.1, 0.6)
                      node[below] {$x_*$};
\end{tikzpicture}}
\hspace{0.5cm}
\subfigure[$0<a_1< 1$ and $\lambda_1\ge\frac{1-a_1}{2}$]{
\begin{tikzpicture}
	\begin{axis}[
	ticks=none,
	width=190pt,
	axis x line=middle,
	axis y line=center,
	xmin=-0.5,
        xmax=1.5,
        ymin=-0.1,
        ymax=0.7,]
	\addplot+[
	mark=none,
	smooth]
	(\x,{(1-\x)*(0.3+\x)});
	\end{axis}
	\draw[-] (3.82, 0.8) -- (3.82, 0.4)
                      node[below] {$1$};
        \draw[dashed] (2.857, 2.5) -- (2.857, 0.3)
                      node[below] {$\lambda_1$};
        \draw[-] (0.53, 0.8) -- (0.53, 0.4)
                      node[below] {$-a_1$};
        \draw[dashed] (0.5, 1.7) -- (3.857, 1.7)
                      node[right] {$\lambda_2$};
         \draw[dashed, red] (3.30, 1.8) -- (3.30, 0.4)
                      node[below] {$x_*$};
\end{tikzpicture}}
\hspace{0.5cm}
\subfigure[$0<a_1< 1$ and $0<\lambda_1<\frac{1-a_1}{2}$]{
\begin{tikzpicture}[scale=1.5]
	\begin{axis}[
	ticks=none,
	width=190pt,
	axis x line=middle,
	axis y line=center,
	xmin=-0.5,
        xmax=1.5,
        ymin=-0.1,
        ymax=0.7,]
	\addplot+[
	mark=none,
	smooth]
	(\x,{(1-\x)*(0.3+\x)});
	\end{axis}
	\draw[-] (3.82, 0.65) -- (3.82, 0.4)
                      node[below] {$1$};
        \draw[-] (1.4, 0.65) -- (1.4, 0.4)
                      node[below] {$\lambda_1$};
        \draw[-] (0.53, 0.65) -- (0.53, 0.4)
                      node[below] {$-a_1$};
        \draw[dashed] (1.5, 2.5) -- (3.857, 2.5)
                      node[right] {$\lambda_2$};
         \draw[dashed, red] (1.67, 2.5) -- (1.67, 0.55)
                      node[below] {{\footnotesize $x_*$}};
         \draw[dashed, red] (2.68, 2.5) -- (2.68, 0.55)
                      node[below] {{\footnotesize $x_*$}};
         \draw[dashed] (2.24, 2.7) -- (2.24, 0.55)
                      node[below] {{\footnotesize $\frac{1-a_1}{2}$}};
         \draw[dashed]  (2.8, 2.73)--(1.0, 2.73)
                      node[left] {{\footnotesize $(\frac{1+a_1}{2})^2$}};
\end{tikzpicture}}
\caption{All possible generic cases for the existence of positive equilibrium $E_*$ with parameters $\lambda_1$, $\lambda_2$ and $a_1$.}
\label{figure1}
\end{center}
\end{figure}
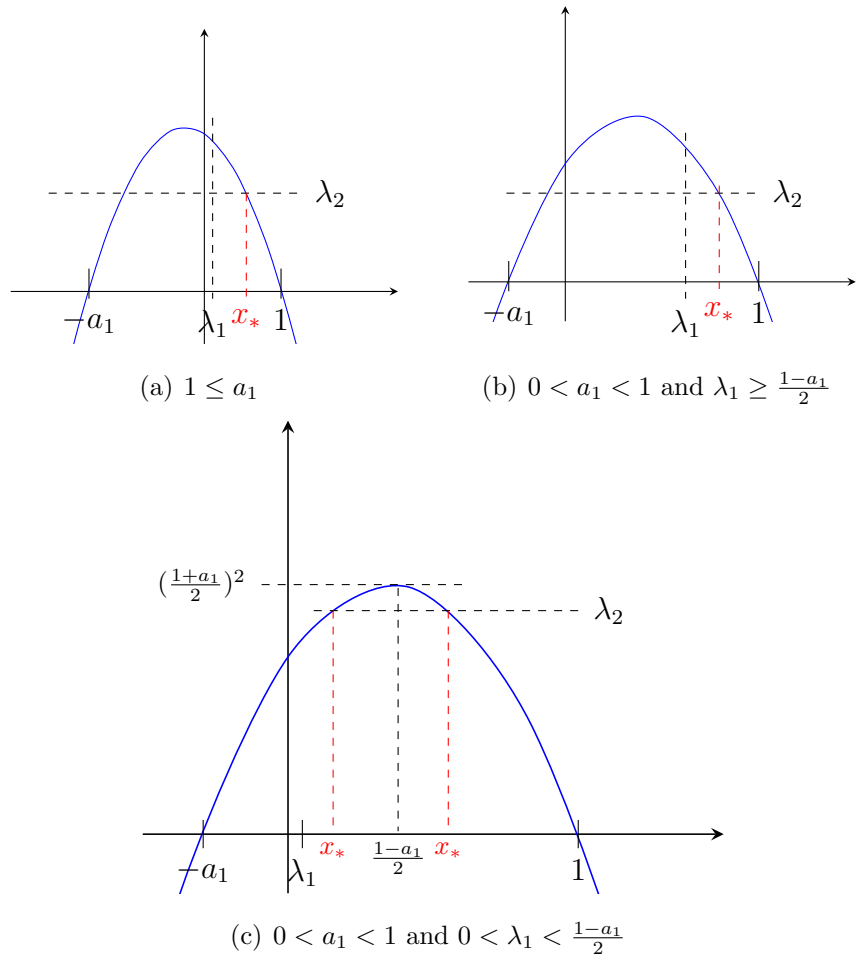


We are in the position to investigate local stability of positive  equilibrium $E_*=(x_*, y_*, z_*)$. By direct computations, the Jacobian evaluated about $E_*$ is
\begin{align}\label{Jac}
J(E_*)=
\left[
\begin{array}{rrr}
-x_*+\frac{x_*y_*}{(a_1+x_*)^2}, & -\frac{x_*}{a_1+x_*}, & 0\\
\frac{a_1m_1y_*}{(a_1+x_*)^2}, & \frac{y_*z_*}{(a_2+y_*)^2}, & -\frac{y_*}{a_2+y_*}\\
0, & \frac{a_2m_2z_*}{(a_2+y_*)^2}, & 0
\end{array}
\right].
\end{align}
To simplify the notations, we set
\begin{align*}
&A = \frac{x_*}{(a_1+x_*)} , &B= \frac{y_*}{(a_1+x_*)},  &&C=\frac{a_1m_1}{(a_1+x_*)},\\
&D = \frac{y_*}{(a_2+y_*)} , &E= \frac{a_2m_2}{(a_2+y_*)}, &&F=\frac{z_*}{(a_2+y_*)},\\
\end{align*}
then \eqref{Jac} can be simplified to the form,
\begin{align*}
J(E_*)=
\left[
\begin{array}{rrr}
-x_*+{AB}, & {-A}, & 0\\
{BC}, & {DF}, & {-D}\\
0, & EF, & 0
\end{array}
\right]
\end{align*}
and the corresponding characteristic equation is
\begin{align}\label{charpoly}
P(\lambda)=\lambda^3+b_2 \lambda^2+b_1 \lambda+b_0=0,
\end{align}
where
\begin{align}\label{RH-1}
b_2 &=x_*-AB-DF,\nonumber \\
b_1 &= (E-x_*)DF+ABDF+ABC,\\
b_0 &= (x_*-AB)DEF.\nonumber
\end{align}
By Routh-Hurwitz criterion, the positive equilibrium $E_*$ is asymptotically stable if and only if  all coefficients, $b_i$, are positive and $b_2b_1>b_0$ where the inequality $b_2b_1>b_0$ is equivalent to
\begin{align}\label{RH-2}
-x_*DF(x_*-AB)-&D^2F^2(E-x_*)+\nonumber \\
&(x_*-AB-DF)(ABDF+ABC)>0.
\end{align}
With the substitution,
\begin{align}\label{yform}
y_*=\lambda_2=(1-x_*)(a_1+x_*),
\end{align}
we can rewrite $x_*-AB$ as the form,
\begin{align*}
x_*-AB = x_* - \frac{x_*y_*}{(a_1+x_*)^2} = x_*(1-\frac{y_*}{(a_1+x_*)^2})
=\frac{x_*}{a_1+x_*}(a_1-1+2x_*).
\end{align*}
Hence $x_*>AB$ if and only if 
\begin{align}\label{RH-3}
x_*>\frac{1-a_1}{2}.
\end{align}
This is a necessary condition of the stability of $E_*$ by \eqref{RH-1}. On the other hand, if  $x_*>AB$ and $F\ll 1$, then it is easy to verified that parameters $b_0$, $b_1$ and $b_2$ are positive, and \eqref{RH-2} holds by taking the limit $F=\frac{z_*}{a_2+y_*}\to 0$, or, equvalently, $z_*\to 0$. By \eqref{z*formula}, $z$ can be seen as a function of $x$, $z(\lambda_1)=0$ and $dz/dx>0$.  Hence we can find a positive number $\varepsilon$ such that $x_*\in(\lambda_1, \lambda_1+\varepsilon)$ implies $E_*$ is stable. 
Let us summarize the results as follows.



\begin{theorem}\label{E*stability}
Let assumptions {\bf (A1)} and  {\bf (A2)} hold and $E_*$ exist. If $E_*$ is asymptotically stable, then \eqref{RH-3} holds. Moreover, if conditions {\bf \eqref{RH-3}} hold, then there exists a positive number $\varepsilon$ such that $x_*\in(\lambda_1, \lambda_1+\varepsilon)$ implies $E_*$ is asymptotically stable.
\end{theorem}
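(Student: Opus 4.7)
The plan is to apply the Routh--Hurwitz criterion to the cubic characteristic polynomial \eqref{charpoly} of $J(E_*)$: the equilibrium $E_*$ is asymptotically stable if and only if $b_0, b_1, b_2 > 0$ and $b_2 b_1 > b_0$. Both halves of the theorem reduce to inspecting these four inequalities on the explicit formulas \eqref{RH-1}.

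For the necessary direction I would single out the constant term $b_0 = (x_* - AB)\,DEF$. The factors $D$, $E$, $F$ are strictly positive at any genuine coexistence equilibrium, so asymptotic stability forces $x_* > AB$. Using the identity $y_* = (1-x_*)(a_1+x_*)$ from \eqref{yform}, a short computation reduces this to
\[
\frac{x_*}{a_1+x_*}\bigl(a_1 - 1 + 2x_*\bigr) > 0,
\]
which is precisely \eqref{RH-3}. The other three Routh--Hurwitz conditions play no role in the necessary direction.

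For the sufficient direction near $x_* = \lambda_1$ the key observation is that $z_*$ is a continuous function of $x_*$ via \eqref{z*formula}, with $z(\lambda_1) = 0$ and $dz/dx > 0$; consequently $F = z_*/(a_2+y_*) \to 0^+$ as $x_* \to \lambda_1^+$. Passing to this limit in \eqref{RH-1} yields
\[
b_2 \longrightarrow x_* - AB, \qquad b_1 \longrightarrow ABC, \qquad b_0 \longrightarrow 0, \qquad b_2 b_1 - b_0 \longrightarrow (x_* - AB)(ABC).
\]
Under \eqref{RH-3} each of these limiting values is strictly positive, and $b_0$ itself is strictly positive for every $F > 0$. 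Because $b_0, b_1, b_2$ and $b_2 b_1 - b_0$ are continuous in $x_*$, all four Routh--Hurwitz inequalities persist on some right neighborhood $(\lambda_1, \lambda_1 + \varepsilon)$, delivering the desired asymptotic stability of $E_*$.

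The serious algebraic obstruction, were one to attempt a global version of the theorem, is \eqref{RH-2}: written out in full it is unwieldy and admits no obvious clean characterization. The entire point of working perturbatively near $x_* = \lambda_1$ is to bypass \eqref{RH-2} by sending $F \to 0^+$, at which point the top predator decouples and the three-dimensional stability question collapses to the planar condition $x_* > (1-a_1)/2$ already identified for $E_{xy}$ in Proposition \ref{2dresults}(iii). The trade-off is that this approach yields only a local statement in a right neighborhood of $\lambda_1$ rather than a sharp global one.
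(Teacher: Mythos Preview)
Your proposal is correct and follows essentially the same approach as the paper. Both directions hinge on the Routh--Hurwitz criterion exactly as you describe: the necessary condition is read off from $b_0=(x_*-AB)DEF>0$, and the sufficient condition near $x_*=\lambda_1$ is obtained by observing that $F\to 0^+$ (via $z_*\to 0^+$ from \eqref{z*formula}) and invoking continuity of the Routh--Hurwitz quantities; your write-up is in fact more explicit than the paper's in computing the limiting values of $b_0$, $b_1$, $b_2$, and $b_2b_1-b_0$.
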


We classify and summarize the existence and stabilities of all equilibria in the Table \ref{table1}. Some global dynamics of system \eqref{foodchain} will be showed in the next subsection, and some remarks and interpretations of Table 1 are given below.
\begin{table}[htp]
    \caption{Classification of parameters by $\lambda_1$ and $\lambda_2$, the details can be found in Remark \ref{E*remark}.}\label{table1}
\begin{center}
\begin{tabular}{|l|c|c|c|c|}
\hline
& $E_x$ & $E_{xy}$ & $E_*$ & Results\\
\hline
(I) $\lambda_1\ge1$ &GAS& $\nexists$ & $\nexists$ & Proposition \ref{yzdieout}\\
\hline
\multicolumn{5}{|l|}{(II) $0<\lambda_1<1$}\\
\hline
\multicolumn{5}{|l|}{\quad (1) $a_1\ge1$($\Rightarrow \lambda_1>\frac{1-a_1}{2}$)} \\
\hline
\quad\quad (a) $\lambda_2> p(\lambda_1)$ && GAS in $\mathbb{R}_+^3$ &$\nexists$ &Theorem \ref{global-results-1}\\
\hline
\multirow{2}{*}{\quad\quad (b) $0<\lambda_2\le p(\lambda_1)$} &&GAS in $H_2$&\multirow{2}{*}{$\exists$!} &Uniformly\\
 &&Saddle in $\mathbb{R}_+^3$ & &Persistence \cite{Freedman1984}\\
\hline
\multicolumn{5}{|l|}{\quad (2) $0<a_1<1$}\\
\hline
\multicolumn{5}{|l|}{\quad\quad (a) $\lambda_1\ge \frac{1-a_1}{2}$} \\
\hline
\quad\quad\quad (i) $\lambda_2 > p(\lambda_1)$ &&GAS in $\mathbb{R}_+^3$&$\nexists$ & Theorem \ref{global-results-1}\\
\hline
\multirow{2}{*}{\quad\quad\quad (ii) $0<\lambda_2<p(\lambda_1)$} &&GAS in $H_2$&\multirow{2}{*}{$\exists$!} &Uniformly\\
 &&Saddle in $\mathbb{R}_+^3$ & &Persistence \cite{Freedman1984}\\
\hline
\multicolumn{5}{|l|}{\quad\quad (b) $\lambda_1< \frac{1-a_1}{2}$}\\
\hline
\multirow{3}{*}{\quad\quad\quad (i) $\lambda_2> (\frac{1+a_1}{2})^2$} && Unstable Spiral in $H_2$& \multirow{2}{*}{$\nexists$} & Theorem \ref{PeriodGAS}\\
&&&&(with \eqref{zdieout-periodic} and \eqref{fcondition}) \\
&&($\exists$! Limit Cycle in $H_2$)&&{\small ($\Gamma$ is GAS)}\\
\hline
\multirow{2}{*}{\quad\quad\quad (ii) $\lambda_2=(\frac{1+a_1}{2})^2$} &&Unstable Spiral in $H_2$& \multirow{2}{*}{$\exists$!} &\multirow{2}{*}{\textcolor{red}{Open}} \\
&&($\exists$! Limit Cycle in $H_2$)&&\\
\hline
\multirow{2}{*}{\quad\quad\quad (iii) $p(\lambda_1)<\lambda_2<(\frac{1+a_1}{2})^2$} &&Unstable Spiral in $H_2$& \multirow{2}{*}{$\exists$2} &\multirow{2}{*}{\textcolor{red}{Open}} \\
&&($\exists$! Limit Cycle in $H_2$)&&\\
\hline
\multirow{2}{*}{\quad\quad\quad (iv) $0<\lambda_2\le p(\lambda_1)$}&&Unstable Spiral in $H_2$& \multirow{2}{*}{$\exists$!} &\multirow{2}{*}{\textcolor{red}{Open}} \\
&&($\exists$! Limit Cycle in $H_2$)&&\\
\hline
\end{tabular}
\end{center}
\end{table}%

\begin{remark}\label{E*remark}
\begin{enumerate}[(i)]
\item In the paper, we always assume that $m_1>d_1$ and $m_2>d_2$, by Proposition \ref{yzdieout} and Lemma \ref{zdieout}, hence $\lambda_i=a_id_i/(m_i-d_i)$ is well defined and positive for $i=1, 2$. Logically, we have two categories, $\lambda_1\ge 1$ \big((I) of Table 1\big) and $\lambda_1<1$ \big((II) of Table 1\big). For $\lambda_1\ge 1$, $E_x$ is GAS by Proposition \ref{yzdieout}. On the other hand, for $\lambda_1<1$, there are two subcases, $a_1\ge1$ \big((II)(1) of Table 1\big) and $0<a_1<1$ \big((II)(2) of Table 1\big). 

\item For the case \big((II)(1) of Table 1\big) $0<\lambda_1<1$ and $a_1\ge 1$ (implied $\lambda_1>(1-a_1)/2$),  then $E_{xy}$ is GAS in $H_2$ by Proposition \ref{2dresults}(iii). With the further condition $\lambda_2>p(\lambda_1)$  \big((II)(1)(a) of Table 1\big), the positive equilibrium $E_*$ does not exist by Proposition \ref{existence}(ii). And Chiu and Hsu \cite{Chiu:1998} showed the global stability of $E_{xy}$ in $\mathbb{R}^3_+$  by modified the Lyapunov function of \cite{Ardito:1995}. Alternatively, we will also give a more simple proof  in Theorem \ref{global-results-1} by comparison principle. On the other hand, in the case \big((II)(1)(b) of Table 1\big), if $\lambda_2< p(\lambda_1)$ then $E_{xy}$ is GAS in $H_2$ as well as saddle in $\mathbb{R}^3_+$ by Proposition \ref{boundaryequilibria}(iii), and we show that there is a unique positive equilibrium $E_*$ by Proposition \ref{existence}(ii)(a). In this case, Freedman and Waltman \cite{Freedman1984} showed the uniform persistence of system \eqref{foodchain}. However, we show that  $E_*$ is stable if $x_*$ is large than and closed to $\lambda_1$ by Theorem \ref{E*stability}, and we conjecture that, for some positive number $\varepsilon$, $E_*$ is GAS for $0<\lambda_1<1$, $a_1\ge 1$ and $0<\lambda_2<p(\lambda_1)$ for $x_*\in (\lambda_1, \lambda_1+\varepsilon)$.
\item The cases (II)(2)(a)(i)-(ii) are similar to cases (II)(1)(a)-(b). The most interesting cases happen in the categories (II)(2)(b) of Table 1. For $\lambda_1<(1-a_1)/2$, the boundary equilibrium $E_{xy}$ of $H_2$ is unstable and there is a unique limit cycle $\Gamma$ in $H_2$ by Proposition \ref{2dresults} \cite{Bulter:1983, Cheng:1981ga, Cheng:1981}. If $\lambda_2>(\frac{1+a_1}{2})^2$ \big((II)(2)(b)(i) of Table 1\big), then the positive equilibrium $E_*$ does not exist by Proposition \ref{existence}(iii)(b) and please refer Figure \ref{figure1}.  In Theorem \ref{PeriodGAS}, we will prove analytically that $\Gamma$ is GAS by applying the Markus Theorem and computing the Floquet multipliers of $\Gamma$.

\item If $\lambda_2\le (\frac{1+a_1}{2})^2$ \big((II)(2)(b)(ii)-(iv) of Table 1\big), then the positive equilibrium $E_*$ exists. In particular, if $p(\lambda_1)<\lambda_2< (\frac{1+a_1}{2})^2$ then we show that there are exactly two positive equilibria with one's $x$-coordinate less than $\frac{1-a_1}{2}$ and another one's $x$-coordinate greater than $\frac{1-a_1}{2}$ by Proposition \ref{existence}(iii)(b), and the positive equilibrium with $x_*<\frac{1-a_1}{2}$ is always unstable by \eqref{RH-3}.
\item We find a sufficient condition, $F=\frac{z_*}{a_2+y_*}\ll 1$, to guarantee the stability of $E_*$. This sufficient condition is reasonable and common in real word. For example, let species $x$, $y$ and $z$ be the plant, herbivore and carnivore, respectively.  It is easy to see that $F\to 0^+$ if and only if $z_*\to 0^+$, which means that the amount of the top predator should be few to stabilize a simple food chain model. This biological implication is compatible with our common sense.
\item It is natural to question that  is $E_*$ globally asymptotically stable when it is stable by Routh-Hurwitz criterion? By some numerical simulations of next section, the answer is Yes or No which is dependent on the dynamics of two-dimensional $x$-$y$ subsystem on $H_2$. 
However, it can be showed that the system is uniformly persistent in the cases (II) (1)(b) and (II)(2)(a)(ii) of Table \ref{table1} by the results in \cite{Freedman1984}, and some details will be discussed in Section 4.

\end{enumerate}
\end{remark}

\subsection{The Global Stability for the Case of Extinction of Top-Predator}
In this subsection, we show analytically the cases (II)(1)(a) and (II)(2)(a)(i) of Table 1, that is, without the top predator $z$, if the boundary equilibrium $E_{xy}$ is stable then it is also GAS. First, we establish the local stability of $E_{xy}$ with the conditions of cases (II)(1)(a) and (II)(2)(a)(i) of Table 1.

\begin{lemma}\label{ExyStable}
If $\lambda_1>\frac{1-a_1}{2}$ and $p(\lambda_1)<\lambda_2$,
then $E_{xy}$ is global asymptotically stable in the interior of positive cone of $x$-$y$ plane and locally asymptotically stable in $\mathbb{R}_+^3$.
\end{lemma}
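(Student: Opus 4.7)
The plan is to observe that this lemma is essentially a bookkeeping statement combining two earlier results; no new machinery is required, and the work consists in verifying that the hypotheses match. The two conclusions split cleanly by dimension: one lives entirely in the invariant plane $H_2$, the other is a local linear stability assertion in $\mathbb{R}_+^3$.

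For the first conclusion (global asymptotic stability on the interior of the positive cone of $H_2$), I would restrict system \eqref{foodchain} to the invariant subspace $H_2=\{z=0\}$, reducing it to the planar subsystem \eqref{subsystem}. The hypothesis $\lambda_1>\tfrac{1-a_1}{2}$ is literally condition \eqref{2dGAS}, so Proposition \ref{2dresults}(iii) applies directly and yields the global stability of $E_{xy}$ in $H_2$.

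For the second conclusion (local asymptotic stability in $\mathbb{R}_+^3$), I would invoke the eigenvalue computation already performed in Section 3.1. The characteristic polynomial of $A(E_{xy})$ factors as
\begin{equation*}
\bigl(\lambda+d_2-\tfrac{m_2\bar y_*}{a_2+\bar y_*}\bigr)\Bigl(\lambda^2-\bigl(-\bar x_*+\tfrac{\bar x_*\bar y_*}{(a_1+\bar x_*)^2}\bigr)\lambda+\tfrac{m_1a_1\bar x_*\bar y_*}{(a_1+\bar x_*)^3}\Bigr)=0,
\end{equation*}
and Proposition \ref{boundaryequilibria}(iii) records that $E_{xy}$ is asymptotically stable when both \eqref{2dGAS} and \eqref{ExyStable-2} hold. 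Using $\bar y_*=p(\lambda_1)$, the hypothesis $p(\lambda_1)<\lambda_2$ is exactly \eqref{ExyStable-2}, and the remaining hypothesis is again \eqref{2dGAS}. Hence both conditions are met and the linear stability in $\mathbb{R}_+^3$ follows.

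There is no substantive obstacle here — the lemma merely records, under a single set of hypotheses, what was proved piecewise before — but it is worth stating separately because it isolates the regime in which the stronger global statement (Theorem \ref{global-results-1}, where the stability of $E_{xy}$ is upgraded from $\mathbb{R}_+^3$-local to $\mathbb{R}_+^3$-global via a differential inequality / Butler--McGehee argument) will be established. The only care required is algebraic: to notice that $\bar y_*=p(\bar x_*)=p(\lambda_1)$ is exactly the quantity appearing in \eqref{ExyStable-2}.
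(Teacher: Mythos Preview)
Your proposal is correct and matches the paper's own justification exactly: the paper simply writes ``This can be obtained directly by Proposition \ref{2dresults}(iii) and Proposition \ref{boundaryequilibria}(iii),'' which is precisely the two-part citation you give, together with the identification of the hypotheses with \eqref{2dGAS} and \eqref{ExyStable-2}.
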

This can be obtained directly by Proposition \ref{2dresults} (iii) and Proposition \ref{boundaryequilibria} (iii). 

\begin{lemma} Let $\big(x(t), y(t), z(t)\big)$ be a solution of \eqref{foodchain} starting from the interior of positive cone of $\mathbb{R}^3$. With the same assumption of Lemma \ref{ExyStable}, we have 
\begin{align*}
\limsup_{t\to\infty}y(t)\le p(\lambda_1).
\end{align*}
\end{lemma}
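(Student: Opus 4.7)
The plan is to analyze the $\omega$-limit set $\Omega$ of the orbit, combining two elementary pointwise observations with the 2D global stability from Proposition~\ref{2dresults}(iii). Rewriting the first two equations as
\[
\dot x=\frac{x(p(x)-y)}{a_1+x},\qquad \dot y=\frac{(m_1-d_1)\,y\,(x-\lambda_1)}{a_1+x}-\frac{yz}{a_2+y},
\]
two facts follow. (O1) Since $z\ge 0$, $\dot y\le 0$ whenever $x\le\lambda_1$. (O2) Under the hypothesis $\lambda_1>(1-a_1)/2$, $p$ is strictly decreasing on $[\lambda_1,1]$, so $p(x)\le p(\lambda_1)$ there; combined with $x(t)\le 1$ for $t$ large (from \eqref{boundedness}), the bound $\dot x\le -\lambda_1\varepsilon/(a_1+1)$ holds whenever $x\in[\lambda_1,1]$ and $y\ge p(\lambda_1)+\varepsilon$. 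In particular, at $x=\lambda_1$ with $y\ge p(\lambda_1)+\varepsilon$ the $x$-velocity is strictly negative, so $x(t)$ cannot cross upward through $\lambda_1$ as long as $y$ stays that high.

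By boundedness, $\Omega$ is nonempty, compact, and invariant, and the claim is equivalent to $\sup\{y:(x,y,z)\in\Omega\}\le p(\lambda_1)$. If $\Omega\subset H_2$, then $z(t)\to 0$, and Markus's limiting theorem \cite{Markus:1956} together with Proposition~\ref{2dresults}(iii) forces $\Omega\subset\{E_0,E_x,E_{xy}\}$, all of which satisfy $y\le p(\lambda_1)$. Otherwise, pick $q\in\Omega$ with $z_q>0$, and suppose for contradiction that $y_q>p(\lambda_1)+2\varepsilon$; the orbit $\gamma$ through $q$ is globally defined and entirely contained in $\Omega$ (backward invariance of $\omega$-limit sets of bounded orbits). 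On $\gamma$, the ``non-crossing'' consequence of (O2) implies that the set where $y^\gamma\ge p(\lambda_1)+\varepsilon$ and $x^\gamma>\lambda_1$ is a union of intervals of uniformly bounded length, while (O1) makes $y^\gamma$ non-increasing on the complementary $\{x^\gamma\le\lambda_1\}$ part. A Lyapunov--Barbalat argument on $V(y)=(y-p(\lambda_1))_+^2/2$ then forces $\omega(\gamma)\subset H_2$, reducing to the previous case and contradicting $y^\gamma\ge p(\lambda_1)+2\varepsilon$ along all of $\gamma$.

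The main obstacle is closing the Lyapunov--Barbalat step: quantifying the possible growth of $V(y^\gamma)$ during the (uniformly short) excursions with $x^\gamma>\lambda_1$ against its decrease on the complementary intervals, in order to conclude $V(y^\gamma(t))\to 0$. Here the assumption $p(\lambda_1)<\lambda_2$ is crucial at the final step: once the sweep drives $y^\gamma$ below $\lambda_2$, the $z$-equation $\dot z/z=-d_2+m_2 y/(a_2+y)$ becomes bounded above by a negative constant, so $z^\gamma\to 0$ exponentially, and Markus's theorem then reduces $\gamma$ to the planar subsystem \eqref{subsystem}, where Proposition~\ref{2dresults}(iii) delivers the contradiction.
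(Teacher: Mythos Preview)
Your proposal takes a far more elaborate route than the paper, and it has genuine gaps beyond the one you already flag. The paper's argument is a two-line comparison: let $(\bar x(t),\bar y(t))$ solve the planar subsystem \eqref{subsystem} with the same initial data $(x_0,y_0)$. Since the $y$-component of the vector field of \eqref{foodchain} equals that of \eqref{subsystem} minus the nonnegative term $yz/(a_2+y)$, the paper asserts $y(t)\le\bar y(t)$ for all $t\ge 0$ by differential inequality; under the hypothesis $\lambda_1>(1-a_1)/2$ the equilibrium $E_{xy}$ is GAS on $H_2$ (Proposition~\ref{2dresults}(iii)), so $\bar y(t)\to p(\lambda_1)$ and the $\limsup$ bound follows. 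No $\omega$-limit-set analysis, no Barbalat lemma, and --- notably --- the hypothesis $p(\lambda_1)<\lambda_2$ is not invoked at all in this lemma; it enters only later, in Theorem~\ref{global-results-1}, to force $z\to 0$.

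Beyond the unfinished Lyapunov--Barbalat balance, your case split has a structural hole. You divide into $\Omega\subset H_2$ versus $\Omega\not\subset H_2$; in the second case you pick $q\in\Omega$ with $z_q>0$ and then \emph{additionally} assume $y_q>p(\lambda_1)+2\varepsilon$ for this same point. But the contradiction hypothesis is merely that $\sup_\Omega y>p(\lambda_1)$, and nothing prevents the points of $\Omega$ with large $y$ from all lying on $H_2$ while other points of $\Omega$ have $z>0$; an $\omega$-limit set can straddle $H_2$ and $\{z>0\}$. Even if your Barbalat step were closed and yielded $\omega(\gamma)\subset H_2$ for the orbit $\gamma$ through $q$, this would constrain only $\overline{\gamma}$, not $\Omega$, so the contradiction does not close. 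Your observations (O1) and (O2) are correct, but they are heavy machinery for a bound that the paper obtains directly from comparison with the planar flow.
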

\begin{proof}
Let $\big(x(t), y(t), z(t)\big)$ be the solution \eqref{foodchain} starting from the positive initial point $(x_0, y_0, z_0)$, and let $\big(\bar x(t), \bar y(t)\big)$ be the solution of \eqref{subsystem} starting from the positive initial point $(x_0, y_0)$. By comparing $y$-coordinate of vector field of these two models, it is easy to see that
\begin{align*}
-d_1 y+\frac{m_1 x y}{a_1+ x}\ge -d_1y+\frac{m_1 x y}{a_1+ x}-\frac{yz}{a_2+y}.
\end{align*}
Hence we have $\bar y(t)\ge y(t)$ for $t\ge 0$ by differential inequality. Moreover, udner the assumptions, $E_{xy}$ is GAS in the positive cone of $H_2$, hence we have $\lim_{t\to\infty}\bar y(t)=p(\lambda_1)$ by Lemma \ref{ExyStable}, and $\limsup_{t\to\infty}y(t)\le\lim_{t\to\infty}\bar y(t)=p(\lambda_1)$.
\end{proof}

\begin{theorem}\label{global-results-1}
Let $\big(x(t), y(t), z(t)\big)$ be a solution of \eqref{foodchain} starting from the interior of positive cone of $\mathbb{R}^3$. With the same assumption of Lemma \ref{ExyStable}, we have $\lim_{t\to\infty}z(t)=0$ and $E_{xy}$ is global  asymptotically stable in $\mathbb{R}^3_+$.
\end{theorem}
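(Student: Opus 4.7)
The plan is to bootstrap the preceding lemma's bound $\limsup_{t\to\infty}y(t)\le p(\lambda_1)$ into strict exponential decay of $z$, and then reduce the system to the two-dimensional flow \eqref{subsystem} on $H_2$ via an asymptotically autonomous argument, using the Butler--McGehee lemma to show that the $\omega$-limit set must collapse to $\{E_{xy}\}$.

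\emph{Step 1 (extinction of $z$).} Since $p(\lambda_1)<\lambda_2$, I would fix $\epsilon>0$ so small that $p(\lambda_1)+\epsilon<\lambda_2$, and choose $T>0$ with $y(t)\le p(\lambda_1)+\epsilon$ for all $t\ge T$. Exploiting the monotonicity of $y\mapsto m_2 y/(a_2+y)$ together with the defining identity $d_2=m_2\lambda_2/(a_2+\lambda_2)$, one obtains
\[
\frac{\dot z(t)}{z(t)}=-d_2+\frac{m_2 y(t)}{a_2+y(t)}\le -d_2+\frac{m_2(p(\lambda_1)+\epsilon)}{a_2+p(\lambda_1)+\epsilon}=:-\mu<0,\qquad t\ge T,
\]
so $z(t)\le z(T)e^{-\mu(t-T)}\to 0$ as $t\to\infty$.

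\emph{Step 2 (reduction and Butler--McGehee).} With $z(t)\to 0$, system \eqref{foodchain} is asymptotically autonomous with two-dimensional limit \eqref{subsystem} on $H_2$, so the Markus theorem gives that the $\omega$-limit set $\Omega$ of the trajectory is a nonempty, compact, connected subset of $H_2$, invariant under the flow of \eqref{subsystem}. By Lemma \ref{ExyStable}, $E_{xy}$ is GAS in the open positive quadrant of $H_2$, while $E_0$ and $E_x$ are saddles on the coordinate axes. Assume for contradiction $\Omega\neq\{E_{xy}\}$. If $\Omega$ meets the open quadrant at some $q\ne E_{xy}$, its backward 2D orbit lies in $\Omega$; since $E_{xy}$ is a forward attractor it cannot be an $\alpha$-limit, and by the boundedness bounds \eqref{boundedness} the $\alpha$-limit of $q$ must lie on the coordinate axes, hence in $\{E_0,E_x\}$. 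Thus $\Omega$ contains a saddle in every case. Applying Butler--McGehee to $E_x\in\Omega$ yields a point of $W^s(E_x)\setminus\{E_x\}$ in $\Omega$, necessarily on the open positive $x$-axis; invariance together with the axis dynamics $\dot x=x(1-x)$ then force $E_0\in\Omega$. A second application of Butler--McGehee at $E_0$ produces a point of the open positive $y$-axis in $\Omega$, whose backward orbit under $\dot y=-d_1 y$ is unbounded, contradicting \eqref{boundedness}. Therefore $\Omega=\{E_{xy}\}$, and combined with $z(t)\to 0$ this establishes global asymptotic stability of $E_{xy}$ in $\mathbb{R}^3_+$.

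The main obstacle I anticipate is the Butler--McGehee chase in Step 2: one has to follow the boundary dynamics carefully, first landing on the positive $x$-axis via the saddle $E_x$, then on the positive $y$-axis via $E_0$, where the divergent backward orbit finally collides with the a priori bound \eqref{boundedness}. The case distinction for where $\Omega$ can sit inside $H_2$ (interior versus purely on the axes) also warrants care. Step 1 is essentially a quantitative sharpening of the preceding lemma and should be routine.
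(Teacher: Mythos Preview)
Your proof is correct and Step~1 coincides with the paper's argument, only stated more carefully with the $\epsilon$-cushion (the paper simply writes ``we may assume $y(t)\le p(\lambda_1)$'' and then manipulates $\dot z/z$ into the form $\frac{a_2+p(\lambda_1)}{m_2-d_2}\frac{\dot z}{z}\le p(\lambda_1)-\lambda_2<0$).

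The difference lies in Step~2. The paper dispatches it in two lines: once $z(t)\to 0$, it invokes the Markus limiting theorem together with the fact that $E_{xy}$ is GAS for the planar limit system~\eqref{subsystem}, and declares the proof complete. In other words, the paper leans on the asymptotically-autonomous machinery (Markus/Thieme) to transfer global attractivity from the limit system to the full system in one stroke. You instead unpack this step by hand: you use Markus only to place $\Omega$ inside $H_2$ as a compact connected invariant set, and then run the Butler--McGehee chase through the saddles $E_x$ and $E_0$ to rule out everything except $\{E_{xy}\}$. Your route is longer but more self-contained, since it does not rely on the (sometimes delicate) convergence statements in the asymptotically-autonomous theory; it also makes explicit why the boundary equilibria cannot capture the trajectory. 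One small point worth tightening in your write-up: treat separately the possibility that the $\alpha$-limit set of $q$ hits $E_0$ rather than $E_x$ first (you implicitly cover it, since Butler--McGehee at $E_0$ alone already yields the unbounded backward orbit on the $y$-axis, but the sentence ``Applying Butler--McGehee to $E_x\in\Omega$'' reads as if $E_x\in\Omega$ is guaranteed).
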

\begin{proof}
Without loss of generality, we may assume that $y(t)\le p(\lambda_1)$ for time large enough. Then 
\begin{align*}
\frac{\dot z}{z}&=-d_2+\frac{m_2y}{a_2+y}\\
&\le -d_2+\frac{m_2p(\lambda_1)}{a_2+p(\lambda_1)}
\end{align*}
which also implies that 
\begin{align*}
\big(a_2+p(\lambda_1)\big)\frac{\dot z}{z}&\le -d_2(a_2+p(\lambda_1))+m_2p(\lambda_1)=(m_2-d_2)p(\lambda_1)-a_2d_2.
\end{align*}
Divided by $m_2-d_2$ to both sides of the last inequality, we obtain
\[\frac{a_2+p(\lambda_1)}{m_2-d_2}\frac{\dot z}{z}\le p(\lambda_1)-\lambda_2<0, \]
which implies that $\lim_{t\to\infty}z(t)=0$.

Applying Markus Limiting Theorem, system \eqref{foodchain} asymptotically approaches 2-dimensional subsystem \eqref{subsystem}. By Lemma \ref{ExyStable}, $E_{xy}$ is global asymptotically stable in the positive cone of $x$-$y$ plane, hence we complete the proof.
\end{proof}

\begin{remark}
In 1998,  Chiu and Hsu \cite{Chiu:1998} considered the following food chain model,
\begin{equation}\label{foodchain-1}
\left\{
\begin{aligned}
\dot X&=RX\left(1-\frac{X}{K}\right)-\frac{M_1XY}{A_1+X},\\
\dot Y&=\left(\frac{M_1X}{A_1+X}-D_1\right)Y-\frac{M_2YZ}{A_2+Y},\\
\dot Z&=\left(\frac{M_2Y}{A_2+Y}-D_2\right)Z,\\
&X(0)>0, Y(0)>0, Z(0)>0,
\end{aligned}
\right.
\end{equation}
which is equivalent to system \eqref{original_model} and \eqref{foodchain} by the same rescaling \eqref{rescaling} with $C_1=C_2=1$. It is easy to see that the two-dimensional subsystem  of \eqref{foodchain-1} only containing species $X$ and $Y$ has a unique positive equilibrium 
\[ (X_*, Y_*)=\left(\frac{A_1D_1}{M_1-D_1}, \frac{R}{M_1}(1-\frac{X_*}{K})(A_1+X_*)\right)\]
which is equivalent to $(\bar x_*, \bar y_*)=\big(\lambda_1, p(\lambda_1)\big)$ of system \eqref{foodchain}. 

Let $\big(X(t), Y(t), Z(t)\big)$ be a solution of system \eqref{foodchain-1}. Chiu and Hsu \cite{Chiu:1998} showed that, by extending the Lyapunov functions introduced by Ardito and Ricciardi \cite{Ardito:1995}, if 
\begin{align}\label{HsuCondition}
 X_*\ge \frac{K-A_1}{2} \quad\text{ and }\quad \frac{M_2Y_*}{A_2+Y_*}-D_2\le 0 
\end{align} 
then $(X(t), Y(t), Z(t))\to (X_*, Y_*, 0)$ as $t\to\infty$ where the inequalities \eqref{HsuCondition} are equivalent to $\lambda_1\ge\frac{1-a_1}{2}$ and $p(\lambda_1)\le\lambda_2$. Hence their results cover the cases (II)(1)(a) and (II)(2)(a)(i) of Table \ref{table1}. Comparing the results, we show the same results of cases (II)(1)(a) and (II)(2)(a)(i) in Theorem \ref{global-results-1} except for the equality sign by a more simple method. Moreover, our method with a little modified can also cover the case (II)(2)(b)(i) with a sufficient condition in the next subsection.
\end{remark}

\subsection{Global Stability of the Limit Cycle $\Gamma$ on $H_2$}
Now we consider the case (II)(2)(b)(i) of Table \ref{table1}. A known result for this case is recalled \cite{Cheng:1981ga}.

\begin{lemma}[Theorem 1, \cite{Cheng:1981ga}]\label{Cheng}
Let $0<a_1<1$ and $\lambda_1<\frac{1-a_1}{2}$, then there exits a unique limit cycle $\Gamma$ of \eqref{subsystem} on $H_2$.
\end{lemma}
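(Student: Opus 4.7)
The plan is to treat existence and uniqueness of $\Gamma$ separately, since the second is substantially harder than the first. Under the hypotheses $0<a_1<1$ and $\lambda_1<(1-a_1)/2$, the interior equilibrium $E_{xy}=(\lambda_1,p(\lambda_1))$ of the planar system \eqref{subsystem} sits on the \emph{increasing} branch of the prey nullcline $y=p(x)=(1-x)(a_1+x)$. Inspecting the Jacobian $A(\bar x_*,\bar y_*)$ already computed in the excerpt, its trace equals $-\bar x_*+\bar x_*\bar y_*/(a_1+\bar x_*)^2$, which is strictly positive precisely because $\bar x_*=\lambda_1<(1-a_1)/2$; its determinant is positive as well. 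Hence $E_{xy}$ is an unstable focus (or node), and it is the only equilibrium of \eqref{subsystem} in the open first quadrant.

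For existence, I would exhibit a compact annular region $\mathcal{A}\subset H_2$ that is positively invariant and contains no equilibria. The outer boundary is built from pieces of $\{x=1+\varepsilon\}$, $\{y=Y_{\max}\}$ for $Y_{\max}=1+1/(4d_1)+1$, and small arcs along the coordinate axes joining them, using the a priori bound \eqref{boundedness} restricted to $z\equiv0$ to guarantee inward flow; near $E_x$ the outer boundary is deformed so that the one-dimensional unstable manifold of $E_x$ enters $\mathcal{A}$ transversally. The inner boundary is a small circle about the repelling $E_{xy}$, which is crossed outward because $E_{xy}$ is linearly unstable. Since $\mathcal{A}$ is compact, positively invariant, and contains no equilibria, the Poincar\'e--Bendixson theorem forces every $\omega$-limit set in $\mathcal{A}$ to be a periodic orbit, yielding at least one limit cycle $\Gamma$ encircling $E_{xy}$.

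For uniqueness I would follow the symmetrization approach of Cheng. After the translation $w=x-(1-a_1)/2$ the prey isocline becomes the even parabola $y=\big((1+a_1)/2\big)^2-w^2$, invariant under the involution $\sigma:w\mapsto -w$. One multiplies the vector field by a positive, $w$-dependent integrating factor chosen so that the reflected image under $\sigma$ of any orbit satisfies an explicit slope comparison with the original orbit on the two half-planes $w\gtrless 0$. The argument then proceeds by contradiction: if two nested limit cycles $\Gamma_1\subset\mathrm{int}\,\Gamma_2$ existed around $E_{xy}$, the slope comparison on the upper and lower arcs (separated by the horizontal line $y=p(\lambda_1)$) together with the monotonicity of the Holling response $m_1x/(a_1+x)$ would force $\Gamma_1$ and $\Gamma_2$ to intersect, contradicting uniqueness of solutions. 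A complementary computation of the Floquet exponent $\oint_\Gamma \mathrm{div}\,F\,dt$ via the same symmetry shows each cycle is hyperbolic and stable, ruling out the degenerate case.

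The main obstacle is clearly the uniqueness step: producing the integrating factor and verifying the strict slope inequality across the symmetry axis $w=0$ is where the specific algebraic form of the Holling type II response is essential, and it is the step that absorbs most of the technical work in \cite{Cheng:1981ga}. If this delicate slope comparison proves intractable in the present notation, the fallback is to invoke the Kuang--Freedman/Zhang uniqueness framework for generalized Gause systems, which packages exactly this argument into a Dulac-type criterion applicable to \eqref{subsystem}.
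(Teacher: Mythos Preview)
The paper does not prove this lemma at all: it is explicitly labeled as ``Theorem 1, \cite{Cheng:1981ga}'' and introduced with the phrase ``A known result for this case is recalled,'' so the authors simply quote Cheng's 1981 theorem and move on. Your proposal, by contrast, sketches an actual proof, and the sketch is faithful to Cheng's original argument: existence via a Poincar\'e--Bendixson trapping annulus around the unstable focus $E_{xy}$, and uniqueness via the symmetry of the prey isocline $y=p(x)$ about $x=(1-a_1)/2$ together with a reflection/slope-comparison argument ruling out two nested cycles. The fallback you mention (Kuang--Freedman/Zhang Dulac-type criteria) is also a legitimate alternative route to uniqueness for this class of Gause systems. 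So your plan is correct and goes well beyond what the paper itself does; just be aware that for the purposes of this paper the lemma is a citation, not something to be re-proved.
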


In the next result, we will show the local stability of the previous unique limit cycle $\Gamma$ in $\mathbb{R}^3_+$ by considering the linearizing system of \eqref{foodchain} about $\Gamma$ and computing the Floquet multipliers \cite[page 112]{Iooss1992}.
\begin{proposition}\label{stabilityperiod}
With the same assumptions of Lemma \ref{Cheng}, the periodic solution $\Gamma$ is locally asymptotically stable in $\mathbb{R}^3_+$ if 
\begin{align}\label{zdieout-periodic} 
-d_2+\frac{1}{T}\int_0^T \frac{m_2\tilde y(t)}{a_2+\tilde y(t)} \ dt<0, 
\end{align}
where $\Gamma$ is the unique limit cycle on $H_2$ in Lemma \ref{Cheng}, $T$ is the period of $\Gamma$, and $\tilde y(t) $ is the $y$-coordinates of $\Gamma$, that is, $\Gamma=\left\{\gamma(t)=\big(\tilde x(t), \tilde y(t), 0\big): t\in[0, T]\right\}$. 
On the other hand, if 
\begin{align}\label{zsurvive}
-d_2+\frac{1}{T}\int_0^T \frac{m_2\tilde y(t)}{a_2+\tilde y(t)} \ dt>0, 
\end{align}
then $\Gamma$ is unstable in $\mathbb{R}^3_+$.
\end{proposition}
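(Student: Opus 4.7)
The plan is to analyse the stability of $\Gamma$ via Floquet theory in the full three-dimensional system \eqref{foodchain}, exploiting the fact that $\Gamma$ lies entirely in the invariant plane $H_2=\{z=0\}$, which forces the variational equation along $\Gamma$ to have a block upper-triangular structure whose transverse ($z$) block is a scalar.

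First I would evaluate the Jacobian $A(x,y,z)$ from the previous subsection at $\gamma(t)=(\tilde x(t),\tilde y(t),0)$. Because $\tilde z(t)\equiv 0$, the $(3,1)$ and $(3,2)$ entries of $A(\gamma(t))$ vanish (the sole coupling back from $(x,y)$ into the $z$-row comes through the term $\tfrac{a_2 m_2 z}{(a_2+y)^2}$), while the upper-left $2\times 2$ block is exactly the Jacobian of the planar subsystem \eqref{subsystem} on $\Gamma$. Thus the variational equation $\dot\Psi(t)=A(\gamma(t))\Psi(t)$ has the form
\[
\dot\Psi(t)=\begin{pmatrix} J_{2D}(\tilde x(t),\tilde y(t)) & \mathbf{v}(t) \\ 0 & g(t) \end{pmatrix}\Psi(t),
\qquad g(t):=-d_2+\frac{m_2\tilde y(t)}{a_2+\tilde y(t)},
\]
with $\mathbf{v}(t)=\bigl(0,-\tilde y(t)/(a_2+\tilde y(t))\bigr)^{\!\top}$. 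Taking $\Psi(0)=I$, the fundamental matrix inherits the same block upper-triangular shape, so the monodromy matrix $\Psi(T)$ does as well, and its spectrum is the union of the spectrum of the planar monodromy matrix $\Psi_{2D}(T)$ with the single scalar $\psi_{3}(T)$.

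Next I would read off the three Floquet multipliers. The transverse one is
\[
\mu_3=\psi_3(T)=\exp\!\left(\int_0^T g(t)\,dt\right)=\exp\!\left(T\Big[-d_2+\frac{1}{T}\int_0^T\frac{m_2\tilde y(t)}{a_2+\tilde y(t)}\,dt\Big]\right),
\]
computable in closed form precisely because the $\delta z$-equation is autonomous along $\Gamma$. The two remaining multipliers belong to the planar variational system; one of them is necessarily $1$, corresponding to the tangent direction $\dot\gamma(t)$, and the other, $\mu_2$, must satisfy $|\mu_2|<1$ since $\Gamma$ is (locally) asymptotically stable within $H_2$. With this in hand the proposition follows at once: if \eqref{zdieout-periodic} holds then $\mu_3<1$, the two nontrivial multipliers lie strictly inside the unit disk, and $\Gamma$ is locally asymptotically stable in $\mathbb{R}^3_+$; if \eqref{zsurvive} holds then $\mu_3>1$, giving an expanding transverse direction, so $\Gamma$ is unstable in $\mathbb{R}^3_+$.

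The main obstacle I anticipate is the planar hyperbolicity statement $|\mu_2|<1$, which is \emph{not} a direct consequence of the uniqueness result in Lemma \ref{Cheng}. To close this gap cleanly I would invoke Proposition \ref{2dresults}(iii) together with a Poincar\'e--Bendixson argument inside $H_2$: the focus $E_{xy}$ is unstable, every orbit in $H_2\setminus\{E_{xy}\}$ is bounded by \eqref{boundedness}, and $\Gamma$ is the only periodic orbit, so by Poincar\'e--Bendixson $\Gamma$ attracts every nonequilibrium orbit in $H_2$. A standard Poincar\'e-map argument on a transversal to $\Gamma$ in $H_2$ then gives $|\mu_2|\le 1$, and strict inequality (i.e.\ hyperbolicity) because otherwise a small transverse perturbation would generate a one-parameter family of nearby limit cycles, contradicting uniqueness. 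All other steps are routine: verifying the block structure, identifying the tangential multiplier $1$, and combining the three multipliers to conclude local stability or instability.
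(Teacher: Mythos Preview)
Your proposal is correct and follows essentially the same approach as the paper: linearize \eqref{foodchain} along $\Gamma$, observe that the Jacobian $DF(\gamma(t))$ is block upper-triangular because $\tilde z\equiv 0$, so the monodromy matrix has the same structure with transverse multiplier $\mu_3=\exp\bigl(\int_0^T(-d_2+m_2\tilde y/(a_2+\tilde y))\,dt\bigr)$, and then invoke the planar stability of $\Gamma$ on $H_2$ to conclude that stability in $\mathbb{R}^3_+$ is governed solely by $\mu_3$. Your discussion of why $|\mu_2|<1$ strictly is in fact more careful than the paper's, which simply cites that $\Gamma$ is ``the limit cycle on $H_2$'' and moves on.
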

\begin{proof}
Let $\xi\in\mathbb{R}^3$ be small enough with positive $z$-coordinate as well as $\xi+\gamma(t)\in\mathbb{R}^3_+$ for all $t\in[0, T]$. And $\phi(t; \xi+\gamma(0))$ is the solution of \eqref{foodchain} with initial point $\xi+\gamma(0)$. Note that $\phi(t; \gamma(0))=\gamma(t)$. Define
\begin{align*}
h(t)
\equiv\phi(t; \xi+\gamma(0))-\phi(t; \gamma(0)),
\end{align*}
then we have the estimations 
\begin{align}\label{approximation}
\frac{dh}{dt}
&=\frac{d}{dt}\phi(t; \xi+\gamma(0))-\frac{d}{dt}\phi(t; \gamma(0))\nonumber \\
&=F(\phi(t; \xi+\gamma(0)))-F(\phi(t; \gamma(0)))\nonumber \\
&=DF(\phi(t; \gamma(0)))h(t)+ o(||h(t)||),\nonumber \\
&=DF(\gamma(t))h(t)+ o(||h(t)||)
\end{align}
for $||\xi||$ small enough and $F:\mathbb{R}^3\to \mathbb{R}^3$ is the right hand side function of \eqref{foodchain}. 

By considering the linear part of the previous equation,
\[\frac{dh}{dt}=DF(\gamma(t))h(t),\]
it is easy to see that the fundamental matrix solution $M(t)=\left[M_{i,j}(t)\right]_{i, j=1}^3$ satisfies the equation
\begin{align}\label{3d-linearperiodicsystem}
\frac{dM}{dt}
 &=\begin{bmatrix} 1-2\tilde x-\frac{a_1\tilde y}{(a_1+\tilde x)^2} & -\frac{\tilde x}{a_1+\tilde x}&0\\ 
 \frac{a_1m_1\tilde y}{(a_1+\tilde x)^2} & -d_1+\frac{m_1\tilde x}{a_1+\tilde x}& -\frac{\tilde y}{a_2+\tilde y}\\ 
 0 & 0 & -d_2+\frac{m_2\tilde y}{a_2+\tilde y}
 \end{bmatrix} M.
\end{align}
So we can easily solve the components, $M_{31}(t)\equiv0$ and $M_{32}(t)\equiv0$. Moreover, $M_{33}$, satisfying the equation
\[\frac{d}{dt} M_{33}=\left(-d_2+\frac{m_2\tilde y(t)}{a_2+\tilde y(t)}\right)M_{33}(t),\]
can be solved as
\begin{align}
M_{33}(t)=\exp\left({\displaystyle\int_0^t-d_2+\frac{m_2\tilde y(s)}{a_2+\tilde y(s)}ds}\right).
\end{align}
Hence the monodromy matrix $M(T)$ has the form,
\begin{align}
M(T)=
\begin{bmatrix}\label{monodromy}
M_{11}(T) & M_{12}(T) & M_{13}(T)\\
M_{21}(T) & M_{22}(T) & M_{23}(T)\\
0 & 0 & M_{33}(T)
\end{bmatrix},
\end{align}
and the local stability of $\Gamma$ in $\mathbb{R}^3_+$ is dependent on two Floquet multipliers (eigenvalues) corresponding to the up-left $2\times 2$ submatrix of \eqref{monodromy},
\begin{align}
\begin{bmatrix}\label{2dmonodromy}
M_{11}(T) & M_{12}(T) \\
M_{21}(T) & M_{22}(T) 
\end{bmatrix},
\end{align}
 as well as the third Floquet multiplier, $M_{33}(T)$. 

In addition, the matrix \eqref{2dmonodromy} is exact the monodromy matrix of $\Gamma$ on $H_2$ which is an invariant subspace of $\mathbb{R}^3_+$, and, under assumptions of Lemma \ref{Cheng}, $\Gamma$ is the limit cycle on $H_2$. Hence we can establish that the local stability of $\Gamma$ in $\mathbb{R}^3_+$ is dependent on $M_{33}(T)$ only. So if  \eqref{zdieout-periodic} holds then $\Gamma$ is asymptotically stable, and, on the other hand, if  \eqref{zsurvive} holds, then $\Gamma$ is unstable. This completes the proof.

\end{proof}
Before we show the second main global result, an extinction result of species $z$ will be established with a sufficient condition. All solutions $\big(x(t), y(t), z(t)\big)$ of \eqref{foodchain} with a positive initial point  eventually enter the bounded attracting set \eqref{boundedness}. We may assume $y_M=\max y(t)$ for $t$ large enough.
\begin{lemma}\label{z-dieout}
Let $\big(x(t), y(t), z(t)\big)$ be a solution of \eqref{foodchain} with a positive initial point. With the same assumptions of Proposition \ref{stabilityperiod}, if the inequality
\begin{align}\label{fcondition}
\frac{a_2}{a_2+y_M}\lambda_2>\frac{(1+a_1)^3}{4a_1}
\end{align}
holds, then $\lim_{t\to\infty}z(t)=0$.
\end{lemma}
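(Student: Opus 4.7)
The plan is to show $\int_0^t \dot z(s)/z(s)\,ds \to -\infty$ as $t\to\infty$, forcing $z(t)\to 0$. The argument combines a time-averaged upper bound on $y(t)$, derived from the first equation of \eqref{foodchain}, with Jensen's inequality applied to the third.

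Multiplying $\dot x/x = 1-x-y/(a_1+x)$ by $(a_1+x)$ produces the clean identity $\tfrac{d}{dt}(a_1\ln x + x) = p(x) - y$. Integrating over $[0,t]$ gives $\int_0^t y\,ds = \int_0^t p(x)\,ds - [a_1\ln x(t)+x(t)] + [a_1\ln x(0)+x(0)]$, and since $p(x)\le L\equiv (1+a_1)^2/4$ everywhere, one obtains $\limsup_{t\to\infty}\bar y_t\le L$ with $\bar y_t\equiv t^{-1}\int_0^t y\,ds$, provided $x(t)$ eventually stays above a positive constant. From the third equation, $\ln(z(t)/z(0)) = -d_2 t + m_2\int_0^t y/(a_2+y)\,ds$; because $y\mapsto m_2 y/(a_2+y)$ is concave and increasing, Jensen's inequality yields $t^{-1}\int_0^t m_2 y/(a_2+y)\,ds \le m_2\bar y_t/(a_2+\bar y_t)$, and combining with $\limsup\bar y_t \le L$ gives
\[
\limsup_{t\to\infty}\tfrac{1}{t}\ln z(t) \le -d_2 + \frac{m_2 L}{a_2+L} = \frac{(m_2-d_2)(L-\lambda_2)}{a_2+L}.
\]
This is strictly negative precisely when $\lambda_2 > L$; since \eqref{fcondition} implies $\lambda_2 > (1+a_1)^3/(4a_1) = L\cdot(1+a_1)/a_1 > L$, we conclude $z(t)\to 0$ exponentially.

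The main obstacle is justifying $\liminf_{t\to\infty} x(t) > 0$, needed to control the boundary term $a_1\ln x(t)$ above. For this I would use the pointwise bound $y(t)\le y_M$ in the first equation, obtaining $\dot x/x \ge 1-x-y_M/(a_1+x)$, and then compare with the scalar ODE $\dot\xi = \xi(1-\xi-y_M/(a_1+\xi))$. The specific algebraic form of \eqref{fcondition}, with its factor $(1+a_1)/a_1 > 1$ multiplying $L$, is tailored so that the restriction it imposes on $y_M$ forces this comparison ODE to admit a positive stable equilibrium $\xi^\ast>0$, which via comparison supplies the required uniform positive lower bound on $x(t)$.
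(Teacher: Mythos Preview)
Your Jensen/time-average argument is clean, but the proposal has a genuine gap at exactly the point you flag: the claim that \eqref{fcondition} forces the comparison ODE $\dot\xi=\xi\bigl(1-\xi-y_M/(a_1+\xi)\bigr)$ to have a positive stable equilibrium is false. A positive equilibrium exists only when $p(\xi)=y_M$ has a root, i.e.\ when $y_M\le L=(1+a_1)^2/4$. Condition \eqref{fcondition} imposes no such bound on $y_M$; indeed the paper's own worked example has $a_1=0.24$, $d_1=0.3$, giving $y_M\le 1+1/(4d_1)\approx 1.83$ while $L\approx 0.384$. For those parameters the right-hand side of your comparison ODE is strictly negative for all $\xi>0$, so $\xi(t)\to 0$ and you get no lower bound on $x(t)$. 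Without that, the boundary term $-a_1\ln x(t)/t$ in your average of $y$ is uncontrolled and the inequality $\limsup\bar y_t\le L$ is unproved.

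The paper avoids this difficulty by a different mechanism. Instead of averaging and Jensen, it forms the \emph{pointwise} linear combination
\[
\frac{a_2}{m_2-d_2}\,\frac{\dot z}{z}+(1+a_1)\,\frac{\dot x}{x}
=\frac{a_2}{a_2+y}(y-\lambda_2)+\frac{1+a_1}{a_1+x}\bigl(p(x)-y\bigr),
\]
and bounds it above by $\frac{(1+a_1)^3}{4a_1}-\frac{a_2}{a_2+y_M}\lambda_2=-\eta<0$ using $\frac{a_2}{a_2+y}\le 1\le\frac{1+a_1}{a_1+x}$ and $p(x)\le L$. (This is where the extra factor $(1+a_1)/a_1$ in \eqref{fcondition} actually enters: it comes from bounding $\tfrac{1+a_1}{a_1+x}p(x)$, not from any constraint on $y_M$.) Integrating gives $x(t)^{1+a_1}z(t)^{\lambda_2/d_2}\to 0$, which couples $x$ and $z$ together. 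The paper then splits into two cases: if $x$ stays bounded below, $z\to 0$ immediately; if $\liminf x(t)=0$, an $\omega$-limit set argument via Butler--McGehee and the local asymptotic stability of $\Gamma$ in $\mathbb{R}^3_+$ (this is where hypothesis \eqref{zdieout-periodic} is used) forces the trajectory to converge to $\Gamma$, whence $z\to 0$. Your proposal never invokes \eqref{zdieout-periodic}, which is a hint that the $\liminf x=0$ scenario cannot be disposed of by comparison alone.
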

\begin{proof}
Let $\eta\equiv\frac{a_2}{a_2+y_M}\lambda_2-\frac{(1+a_1)^3}{4a_1}>0$. Without loss of generality, we assume that $0<x(t)\le 1$ and $0<y(t)\le y_M$ for $t\ge 0$. Then the first and third equations of \eqref{foodchain} can be rewritten as the forms,
\begin{align*}
\frac{\dot x}{x}&=\frac{1}{a_1+x}\big(p(x)-y\big),\\
\frac{\dot z}{z}&=\frac{m_2-d_2}{a_2+y}(y-\lambda_2).
\end{align*}
Furthermore, the value $\left(\frac{1+a_1}{2}\right)^2$ is the global maximum of $p(x)$ occurred at $\frac{1-a_1}{2}$, then
\begin{align*}
\frac{a_2}{m_2-d_2}\frac{\dot z}{z}+(1+a_1)\frac{\dot x}{x}&=\frac{a_2}{a_2+y}(y-\lambda_2)+\frac{1+a_1}{a_1+x}p(x)-\frac{1+a_1}{a_1+x}y\\
&\le \frac{1+a_1}{a_1+x}p(x)-\frac{a_2}{a_2+y}\lambda_2\\
&\le \frac{1+a_1}{a_1}\left(\frac{1+a_1}{2}\right)^2-\frac{a_2}{a_2+y_M}\lambda_2=-\eta.
\end{align*}
This inequality implies that $Kx(t)^{1+a_1}z(t)^{\frac{\lambda_2}{d_2}}\le -\eta t$ for some positive constant $K$ by integrating both side from 0 to $t$, and, consequently, 
\begin{align}\label{approach}
x(t)^{1+a_1}z(t)^{\frac{\lambda_2}{d_2}}\to 0 \text{ as } t\to\infty.
\end{align}

Logically, we only need to consider two cases. The one is that $x(t)$ is bounded below by a positive number for all time large enough. The other one is that there is a sequence of time $\{t_k\}$ such that $x(t_k)\to 0$ as $k\to\infty$. If the first one happens, then \eqref{approach} implies $\lim_{t\to\infty}z(t)=0$. 


We assume that the second case happens. Let $\phi(t; p)=\big(x(t), y(t), z(t)\big)$ be the solution of \eqref{foodchain} with positive initial point $p$. The assumption, $x(t_k)\to 0$ as $k\to\infty$, implies that there a point $q_1$ on $y$-$z$ plane belonging to the omega limit set of $p$, $\omega(p)$. Moreover, by Proposition \ref{boundaryequilibria} (i) and invariance of omega limit set, we have $E_0\in\omega(p)$. However, $\phi(t; p)$ does not converge to $E_0$, hence there is a point $q_2\in H_1$ such that $q_2\in\omega(p)$ by Butler-McGehee Lemma \cite{Freedman1984}. Similarly, there is a point $q_3\in H_2$ such that $q_3\in\omega(p)$. Finally, by the global stability of $\Gamma$ on $H_2$, the point $q_3$ approaches $\Gamma$ and $\Gamma\subset\omega(p)$. Since the initial point $p$ is arbitrary, we have that $\Gamma$ belong to the omega limit set of any solution of \eqref{foodchain} with positive initial condition. Consequently, all solutions converge to $\Gamma$ since it is asymptotically stable. This proves that $z(t)\to 0$ as $t\to\infty$ for the case two. So we completes the proof.
\end{proof}
\begin{remark}
\begin{enumerate}[(i)]
\item It is clear that \eqref{fcondition} is a sufficient condition of the inequality $\lambda_2>(1+a_1)^2/4$. We will establish the global stability of the periodic solution $\Gamma$ in $\mathbb{R}^3_+$ under the condition \eqref{fcondition} in the next main result. That is, we show a partial results of the case (II)(2)(b)(i) of Table 1.
\item To verify inequality \eqref{fcondition}, the value $y_M$ is seems dependent on the initial points. However, we can take the upper bound $y_M\le 1+1/(4d_1)$ by the eventual attracting region \eqref{boundedness} which is valid for all solutions starting from any point in $\mathbb{R}^3_+$. Here is a set of parameter,
\begin{itemize}
\item $a_1 = 0.24$, $m_1 = 0.5$, $d_1 = 0.3$, $a_2 = 0.4$, $m_2 = 0.4$, $d_2 = 0.39$,
\item $\lambda_1=a_1d_1/(m_1-d_1)=0.36$, $\lambda_2= a_2d_2/(m_2-d_2)= 15.6$,
\item $(1-a_1)/2= 0.38$, $(1+a_1)^2/4= 0.3844$, $\frac{(1+a_1)^3}{4a_1}\approx 1.986$
\item $1+1/(4d_1)\approx 1.833$,
\item $\frac{a_2}{a_2+1.833}\lambda_2\approx 2.794$,
\end{itemize}
satisfying the inequality \eqref{fcondition}.
\end{enumerate}
\end{remark}
\begin{theorem}\label{PeriodGAS}
With assumptions, $0<a_1<1$, $\lambda_1<(1-a_1)/2$, \eqref{zdieout-periodic} and \eqref{fcondition} hold, the periodic solution $\Gamma$ of \eqref{foodchain} on $H_2$ is globally asymptotically stable in $\mathbb{R}^3_+$.
\end{theorem}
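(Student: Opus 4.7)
Proof Proposal:

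My plan is to combine three ingredients already assembled in the paper: Lemma \ref{Cheng} (existence/uniqueness of the limit cycle $\Gamma$ on $H_2$), Proposition \ref{stabilityperiod} (which, under \eqref{zdieout-periodic}, gives Lyapunov stability of $\Gamma$ in $\mathbb{R}^3_+$ via its Floquet multipliers), and Lemma \ref{z-dieout} (which, under \eqref{fcondition}, gives $z(t)\to 0$ for every positive initial datum). With these in hand, the theorem reduces to establishing global attractivity of $\Gamma$, that is, $\omega(p)=\Gamma$ for every $p\in\mathbb{R}^3_+$.

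Fix $p\in\mathbb{R}^3_+$ with solution $\phi(t;p)=(x(t),y(t),z(t))$. Since $z(t)\to 0$ we have $\omega(p)\subset H_2$. I would then split into the two alternatives used in the proof of Lemma \ref{z-dieout}. If $\liminf_{t\to\infty}x(t)=0$, then the Butler-McGehee chain carried out there, passing through $E_0\in H_3$, then $E_x\in H_1$, and finally a point of $W^u(E_x)\subset H_2$ whose planar forward orbit spirals to $\Gamma$ by Proposition \ref{2dresults}(iii), already delivers $\Gamma\subset\omega(p)$. In the complementary case $x(t)\geq \delta>0$ for all large $t$, I would invoke the Markus limiting theorem applied to the asymptotically autonomous planar projection of \eqref{foodchain} to identify $\omega(p)\subset H_2$ as a nonempty compact connected invariant set of \eqref{subsystem}. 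A short backward-invariance argument confines $\omega(p)$ to the open first quadrant of $H_2$: a point $(x_0,0,0)$ with $x_0>0$ in $\omega(p)$ would drag $E_0$ into $\omega(p)$ along the $x$-axis flow $\dot x=x(1-x)$, contradicting $\omega(p)\subset\{x\geq\delta\}$. Inside the open first quadrant, Poincar\'e-Bendixson together with the unstable-focus character of $E_{xy}$ (which possesses no 1D invariant manifolds in $H_2$) leaves only $\{E_{xy}\}$ and $\Gamma$ as candidates for $\omega(p)$.

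The main obstacle is eliminating the exceptional scenario $\omega(p)=\{E_{xy}\}$, because $E_{xy}$ does carry a 1-dimensional stable manifold in the full 3D system (its third eigenvalue $-d_2+m_2\bar y_*/(a_2+\bar y_*)$ is negative under our hypotheses since $\bar y_*=p(\lambda_1)\leq(1+a_1)^2/4<\lambda_2$). I would remove it by a second Butler-McGehee step: if $E_{xy}\in\omega(p)$ and $\omega(p)\neq\{E_{xy}\}$, then $\omega(p)$ must contain a point of $W^u(E_{xy})\subset H_2$, whose planar forward orbit spirals to $\Gamma$ by Proposition \ref{2dresults}(iii), yielding $\Gamma\subset\omega(p)$. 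Thus $\Gamma\subset\omega(p)$ in every case. Finally, I would invoke the Lyapunov stability of $\Gamma$ from Proposition \ref{stabilityperiod}: any sufficiently small forward-invariant neighborhood $U$ of $\Gamma$ traps $\phi(t;p)$ once it enters, and the trajectory must enter $U$ because $\Gamma\subset\omega(p)$; hence $\omega(p)\subset\overline{U}$ for every such $U$, forcing $\omega(p)=\Gamma$. Combined with the Lyapunov stability from Proposition \ref{stabilityperiod}, this proves $\Gamma$ is globally asymptotically stable in $\mathbb{R}^3_+$.
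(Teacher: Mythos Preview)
Your overall strategy coincides with the paper's: use Lemma~\ref{z-dieout} to force $z(t)\to 0$, argue that $\Gamma\subset\omega(p)$, and then invoke the local asymptotic stability of $\Gamma$ from Proposition~\ref{stabilityperiod} to trap the orbit and conclude $\omega(p)=\Gamma$. Your write-up is in fact more careful than the paper's short proof, which jumps directly from ``there exists $q\in H_2\cap\omega(p)$'' to ``$\Gamma\subset\omega(p)$'' citing only uniqueness of the limit cycle and invariance of $\omega(p)$.

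There is, however, a genuine gap at exactly the spot you flag as ``the main obstacle,'' and your proposed fix does not close it. The Butler--McGehee lemma takes $\omega(p)\neq\{E_{xy}\}$ as a \emph{hypothesis} and then produces a point of $W^u(E_{xy})\setminus\{E_{xy}\}$ in $\omega(p)$; it says nothing when $\omega(p)=\{E_{xy}\}$, which is precisely the case you need to eliminate. And that case is not vacuous: under the standing hypotheses $E_{xy}$ has two eigenvalues with positive real part (the planar unstable focus) and one negative eigenvalue $\mu=-d_2+m_2\bar y_*/(a_2+\bar y_*)<0$, so $W^s(E_{xy})$ is a one-dimensional curve. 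Its tangent at $E_{xy}$ has nonzero $z$-component (were it contained in $H_2$ it would be an eigenvector of the $2\times2$ planar block for the eigenvalue $\mu$, which it is not), hence $W^s(E_{xy})$ enters $\{z>0\}$. For any $p\in W^s(E_{xy})\cap\mathbb{R}^3_+$ one has $\omega(p)=\{E_{xy}\}\neq\Gamma$, so your sentence ``Thus $\Gamma\subset\omega(p)$ in every case'' fails for such $p$. The most your argument (and the paper's) actually yields is global attractivity of $\Gamma$ on $\mathbb{R}^3_+\setminus W^s(E_{xy})$; the paper's proof glosses over the same exceptional set.
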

\begin{proof}
Let $\phi(t; p)=\big(x(t), y(t), z(t)\big)$ be the solution of \eqref{foodchain} with positive initial point $p$. With the assumptions, we have $\lim_{t\to\infty}z(t)=0$ by Lemma \ref{z-dieout}. So we can find a point $q\in H_2$ such that $q\in\omega(p)$. Since $\Gamma$ is the unique limit cycle on $H_2$ and the invariance of omega limit set,  we obtain that $\Gamma\subset\omega(p)$. That is, for any $p\in\mathbb{R}^3_+$, there is a sequence of time $\{t_n\}$ such that the solution $\phi(t_n; p)\to \Gamma$ as $n\to\infty$. Consequently, all solutions converge to $\Gamma$ since it is asymptotically stable in $\mathbb{R}^3_+$ by Proposition \ref{stabilityperiod}. The proof is completed.
\end{proof}

\section{Brief Discussions, Biological Implications and Numerical Simulations}


In this work, we investigate the well studied food chain model \eqref{original_model}. After rescaling to \eqref{foodchain}, six parameters, $a_i$, $d_i$ and $m_i$ for $i=1, 2$, can be reformed to two critical parameters $\lambda_1$ and $\lambda_2$ defined firstly in \cite{Hsu:1978vk} which represent the minimum prey population densities, $x$ and $y$, that can support given predators, $y$ and $z$, respectively. Then all well known two-dimensional results are recalled, Proposition \ref{2dresults}, including the extinction of the predator, the existence of positive equilibrium and the existence and uniqueness of the periodic solution, where these facts help us to identify the dynamics of the three dimensional system. \medskip

Based on two extinction results, Proposition \ref{yzdieout} and Lemma \ref{zdieout}, we make assumptions {\bf (A1)} and {\bf (A2)}.  With these two assumptions, parameters $\lambda_1$ and $\lambda_2$ can be used to classify the existence  and dynamics of all equilibria of \eqref{foodchain} in Proposition \ref{boundaryequilibria}, Proposition \ref{existence}, and summarize in Table \ref{table1}. Furthermore, two global extinction/stabilitiy results, Theorem \ref{global-results-1} and Theorem \ref{PeriodGAS} covered cases (II)(1)(a), (II)(2)(a)(i) and (II)(2)(b)(i) of Table \ref{table1}. In particular, it was proved by the differential inequality and the Bulter-McGehee Lemma in Theorem \ref{global-results-1} which is an alternative proof comparing with \cite{Chiu:1998} showed by Lyapunov method for the extinction of species $z$ and the global stability of equilibrium $E_{xy}$. In addition, in Theorem  \ref{PeriodGAS}, we also show the extinction of species $z$, by the differential inequality, and the global stability of the limit cycle on $x$-$y$ plane, by computing the Floquet multipliers. In our knowledge, this is a novel result of food chain model with Holling type II functional response. 

In the cases (II)(1)(b) and (II)(2)(a)(ii) of Table \ref{table1}, although $E_{xy}$ is GAS on $H_2$, it is unstable in $\mathbb{R}^3_+$. Since the inequality $\lambda_2<p(\lambda_1)$ implies that existence of $E_*$ and $E_{xy}$ is unstable by \eqref{ExyStable-2}. So it is naturally to query that is $E_*$ stable or even GAS  in $\mathbb{R}^3_+$? By Theorem \ref{E*stability}, $E_*$ is stable when  $x_*$ is close to $\lambda_1$, and we conjecture, by numerical simulations, that it is GAS. However, it will be an unstable spiral and a periodic solution appear when $x_*$ is far away from $\lambda_1$ by numerical observations. Actually, the uniformly persistence of cases (II)(1)(b) and (II)(2)(a)(ii) of Table \ref{table1} can be proved easily by Theorem 5.1 of  \cite{Freedman1984}. \medskip

Finally, by performing numerical simulations with \verb-xppaut-\cite{ermentrout2002simulating} via \verb-Python- interface, let us discuss some observed numerical phenomena. With Table \ref{table1} as the blueprint, generically, there are only two cases,   (II)(2)(b)(iii) and (II)(2)(b)(iv), to be considered. 
The second one, including cases (II)(2)(b)(iii)- (II)(2)(b)(iv), is that the boundary equilibrium $E_{xy}$ is unstable spiral and there is the unique limit cycle on $H_2$.

We set parameters, $a_1=0.3$, $m_1=5/3$, $d_1=0.4$, $a_2=0.9$, $d_2=0.01$, and vary $m_2$ from 0.02 to 0.15 with step-size 0.001. The values,
\begin{align*}
\frac{1-a_1}{2}&=0.35,\qquad \lambda_1\approx 0.09473684210526316,\\
 \left(\frac{1+a_1}{2}\right)^2&=0.4225, \quad  \text{ and } \quad p(\lambda_1)\approx0.3573407202216067,
\end{align*}
are fixed, and only $\lambda_2$ are various with respect to $m_2$. It can be easily obtained that $\lambda_2\approx (1+a_1)^2/4$ for $m_2\approx 0.0313$ and $\lambda_2\approx p(\lambda_1)$ for $m_2\approx 0.0351$. Some specific values of $m_2$ are taken to simulate the dynamics of system \eqref{foodchain}. Please refer the case (c) of Figure \ref{figure1} and the following Table \ref{table2}. 

\begin{table}[htp]
\caption{Set fixed parameters, $a_1=0.3$, $m_1=5/3$, $d_1=0.4$, $a_2=0.9$, $d_2=0.01$, and vary $m_2$ with different values in the first row. The corresponding $\lambda_2$ and its classification by Table \ref{table1} are showing in the row 2 and 3, respectively.}\label{table2}
\begin{center}
\begin{tabular}{|c|c|c|c|c|c|}
\hline
$m_2$  &0.033&0.042&0.065\\
\hline
$\lambda_2$ &$\approx$ 0.3913&$\approx$0.28&$\approx$0.164\\
\hline
Classification &(II)(2)(b)(iii)& \multicolumn{2}{|c|}{(II)(2)(b)(iv)} \\
\hline
\end{tabular}
\end{center}
\end{table}%

For the the cases, (II)(2)(b)(iii) and (II)(2)(b)(iv) of Table \ref{table1}, on the boundary $H_2$ the equilibrium $E_{xy}$ is unstable spiral and there is the stable periodic solution. What is the global dynamics of system \eqref{foodchain} effecting by the interplay between the boundary periodic solution and the interior equilibrium or interior periodic solution ? Let us see more numerical simulations in the following.

\begin{enumerate}[(i)]

\item $m_2=0.033$ : For this case, in fact, the positive equilibrium can be solved with multiplicity 2, and we get the explicit form for the positive equilibrium
\begin{align*}
x_*&= \frac{1-a_1\pm \sqrt{(1+a_1)^2-4\lambda_2}}{2}\approx 0.1734, 0.5266, \\
y_* &= \lambda_2 \approx 0.3913, \\
z_*&=(a_2+y_*)\Big(\frac{m_1x_*}{a_1+x_*}-d_1\Big)\approx 0.2717, 0.8546.
\end{align*}
Recall from \eqref{RH-3} that the necessary condition for the stability of $E_*$ is $x_*>\frac{1-a_1}{2}$. Hence the equilibrium $(0.1734, 0.3913, 0.2717)$ is unstable. Moreover, we numerically check the stability of equilibrium $(0.5266, 0.3913, 0.8546)$ by Routh-Hurwitz criterion, \eqref{RH-1} and \eqref{RH-2}, and obtain that it is stable. For these parameters, there are a stable positive equilibrium and one limit cycle on $H_2$ which we do not know the stability in $\mathbb{R}_+^3$. So here is an interesting question that how to interplay between these two invariant sets ? However, it is difficulty to check the stability of the limit cycle on $H_2$, so we perform some numerical simulations instead. Two trajectories are simulating by setting parameters which previously mentioned and $m_2=0.033$ with two different initial points, (0.5266,  0.3913, 0.8546) and (0.1734,  0.3913, 0.2717), which are close two positive equilibria, respectively. By referring Figure \ref{figure3}, panels (a) and (b) are the time course of all species and trajectory in $\mathbb{R}^3$ respectively with initial point  (0.5266,  0.3913, 0.8546), and panels (c) and (d) are similar with initial point (0.1734,  0.3913, 0.2717). It is clear that one trajectory converges to the stable positive equilibrium and one trajectory converges to the limit cycle on $H_2$. We plot these two trajectories simultaneously in the panel (e). This phenomenon can be seen as a {\it cycle-point bi-stability}. 

\begin{figure}[hp]
\begin{center}
\subfigure[time courses of all species]{
\includegraphics[scale=0.25]{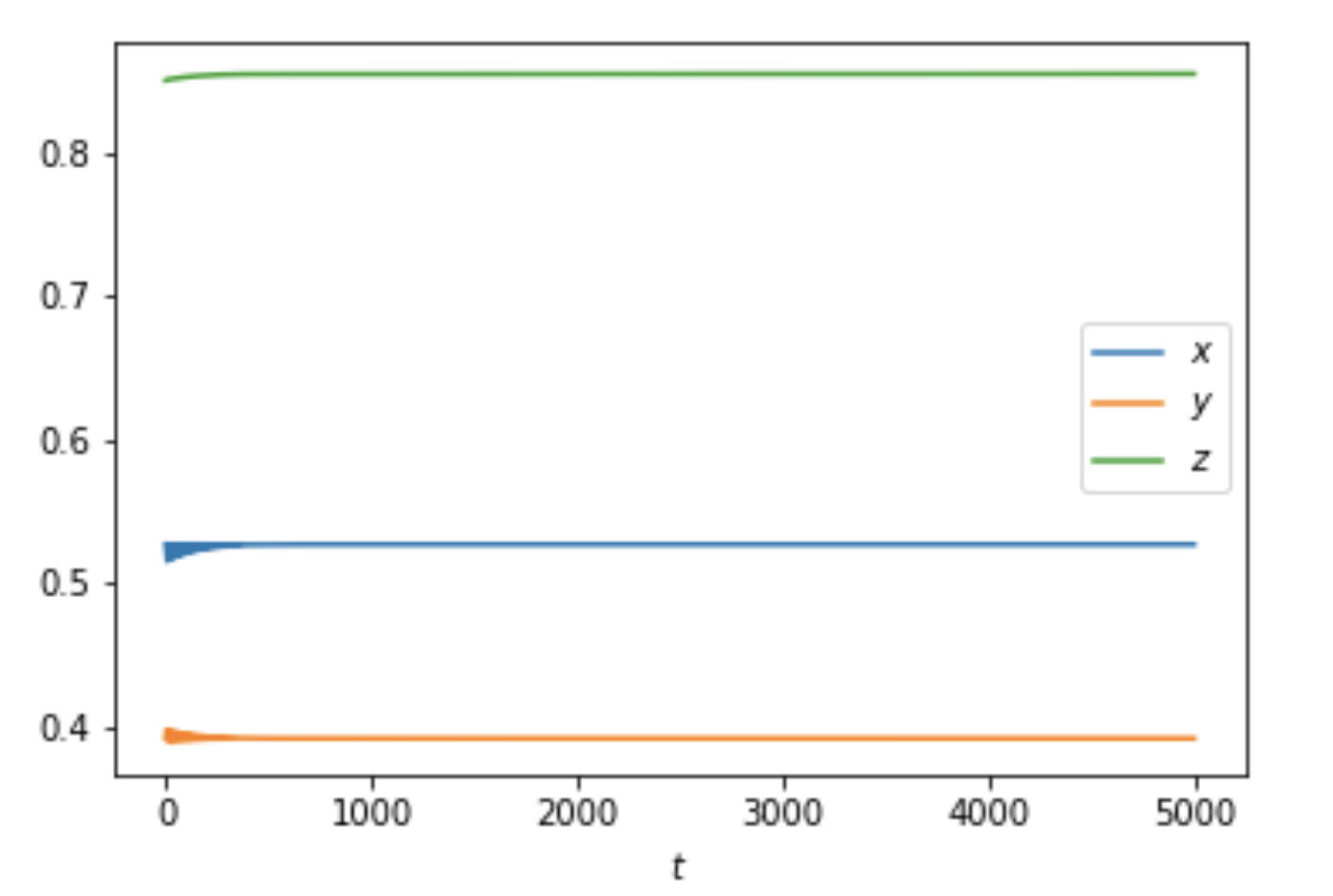}
}
\subfigure[dynamics of \eqref{foodchain}]{
\includegraphics[scale=0.25]{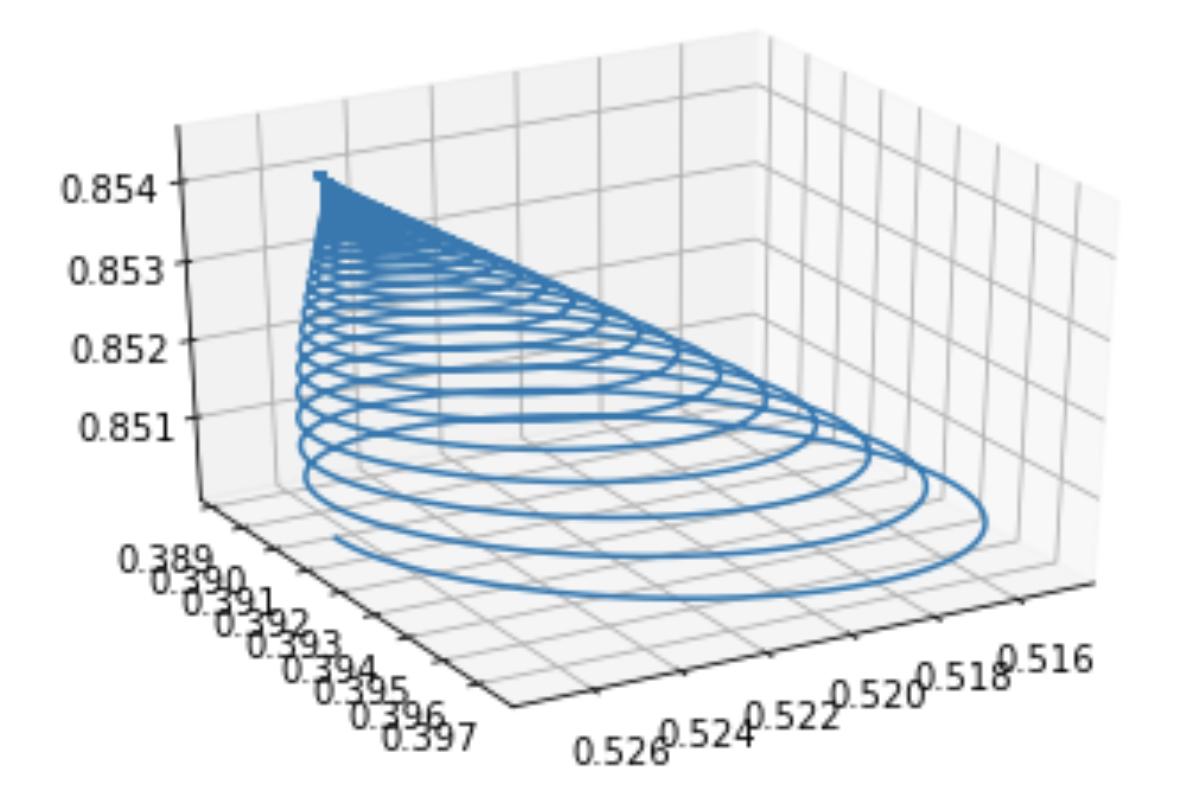}
}
\subfigure[time courses of all species]{
\includegraphics[scale=0.25]{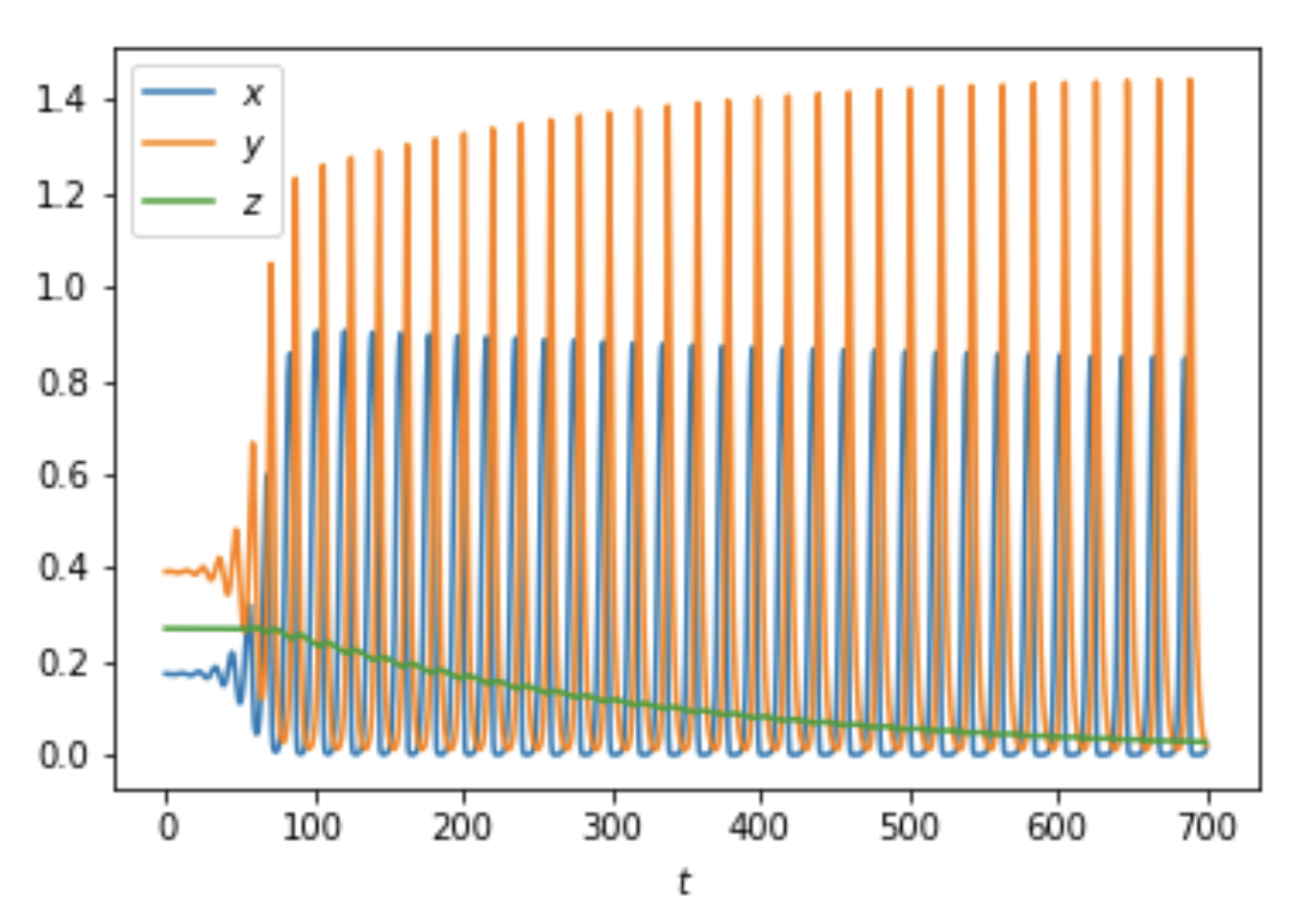}
}%
\subfigure[dynamics of \eqref{foodchain}]{
\includegraphics[scale=0.25]{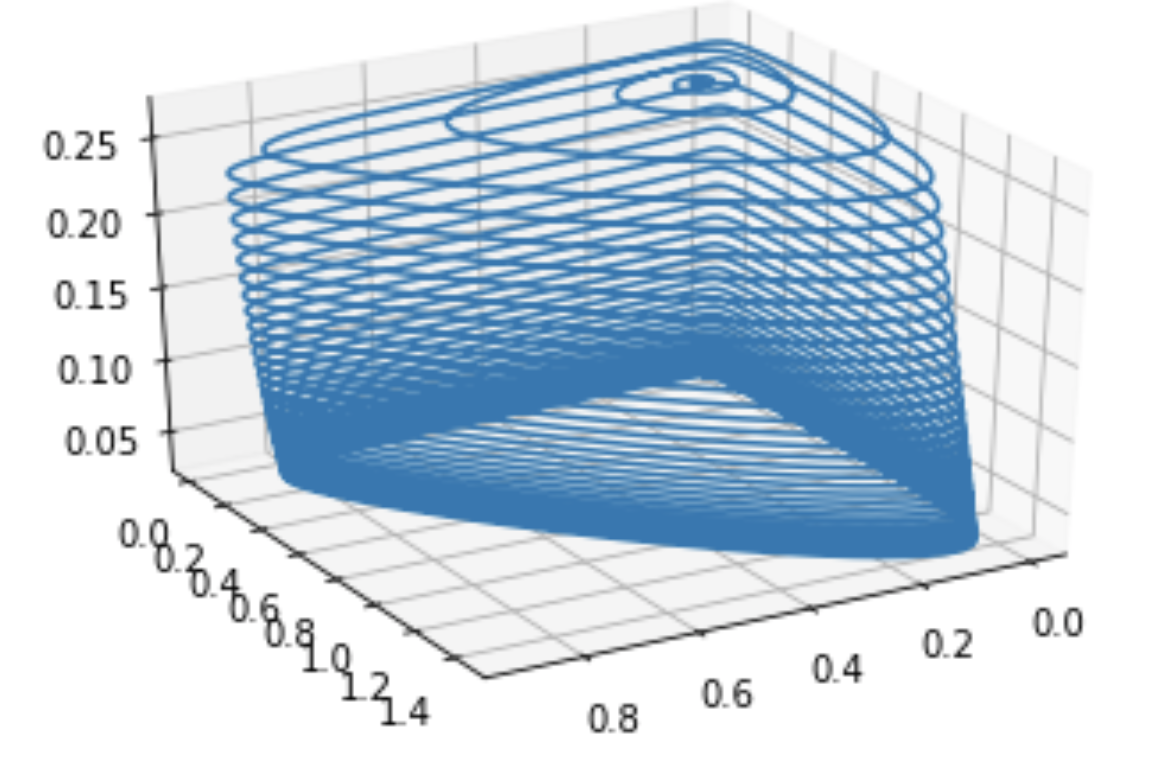}
}
\subfigure[dynamics of \eqref{foodchain}]{
\includegraphics[scale=0.3]{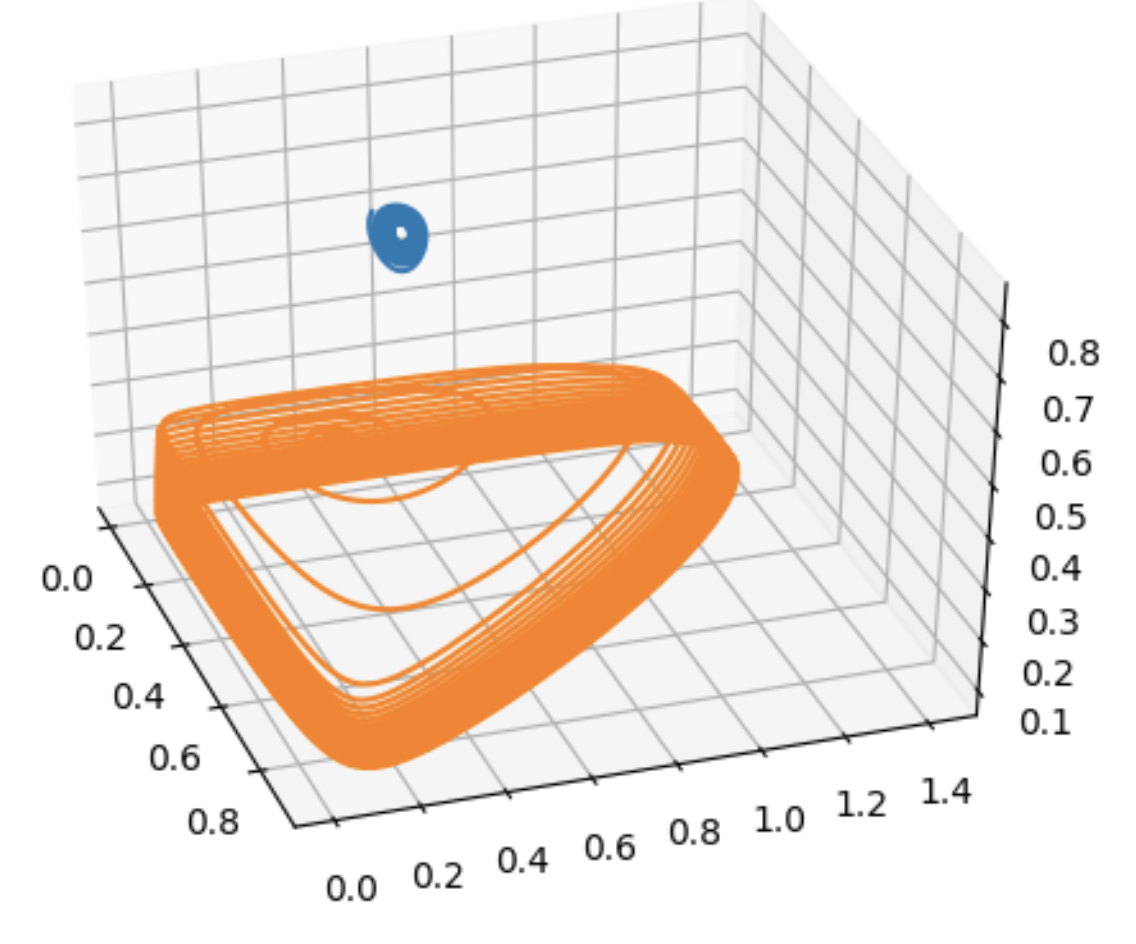}
}
\end{center}
\caption{Set parameters, $a_1 = 0.3$, $m_1 = 5/3$, $d_1 = 0.4$, $a_2 = 0.9$, $d_2 = 0.01$, and $m_2=0.033$. There is a stable interior equilibrium and a limit cycle on $H_2$. The behavior of asymptotic dynamics is dependent on the initial condition. Please see the details in the context.}\label{figure3}
\end{figure}

\item $m_2=0.042$ : For this case, there is only one positive equilibrium which is unstable by using the approximating point (0.7472, 0.2647, 0.9192) to check the stability  via Routh-Hurwitz criterion, \eqref{RH-1} and \eqref{RH-2}. Similarly, we simulate two trajectories of \eqref{foodchain} whose initial points are close to the positive equilibrium (panels (a) and (b) of Figure \ref{figure4}) and close to $H_2$  (panels (c) and (d) of Figure \ref{figure4}), respectively. Putting these two trajectories simultaneously  (panel (e) of Figure \ref{figure4}), we also get a bi-stability phenomenon between two stable periodic solutions where we call it a {\it cycle-cycle bi-stability}.

\begin{figure}[hp]
\begin{center}
\subfigure[time courses of all species]{
\includegraphics[scale=0.25]{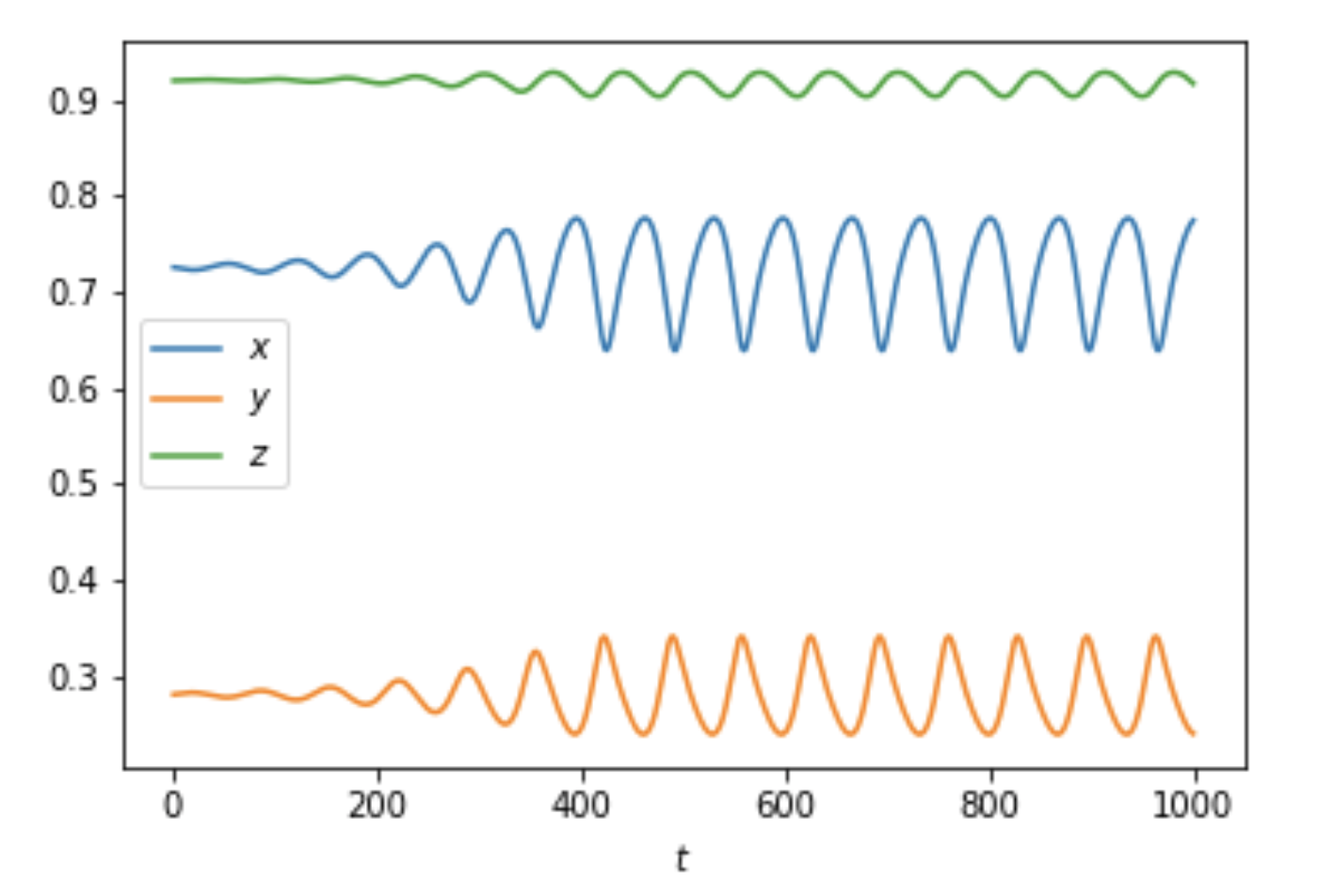}
}
\subfigure[dynamics of \eqref{foodchain}]{
\includegraphics[scale=0.25]{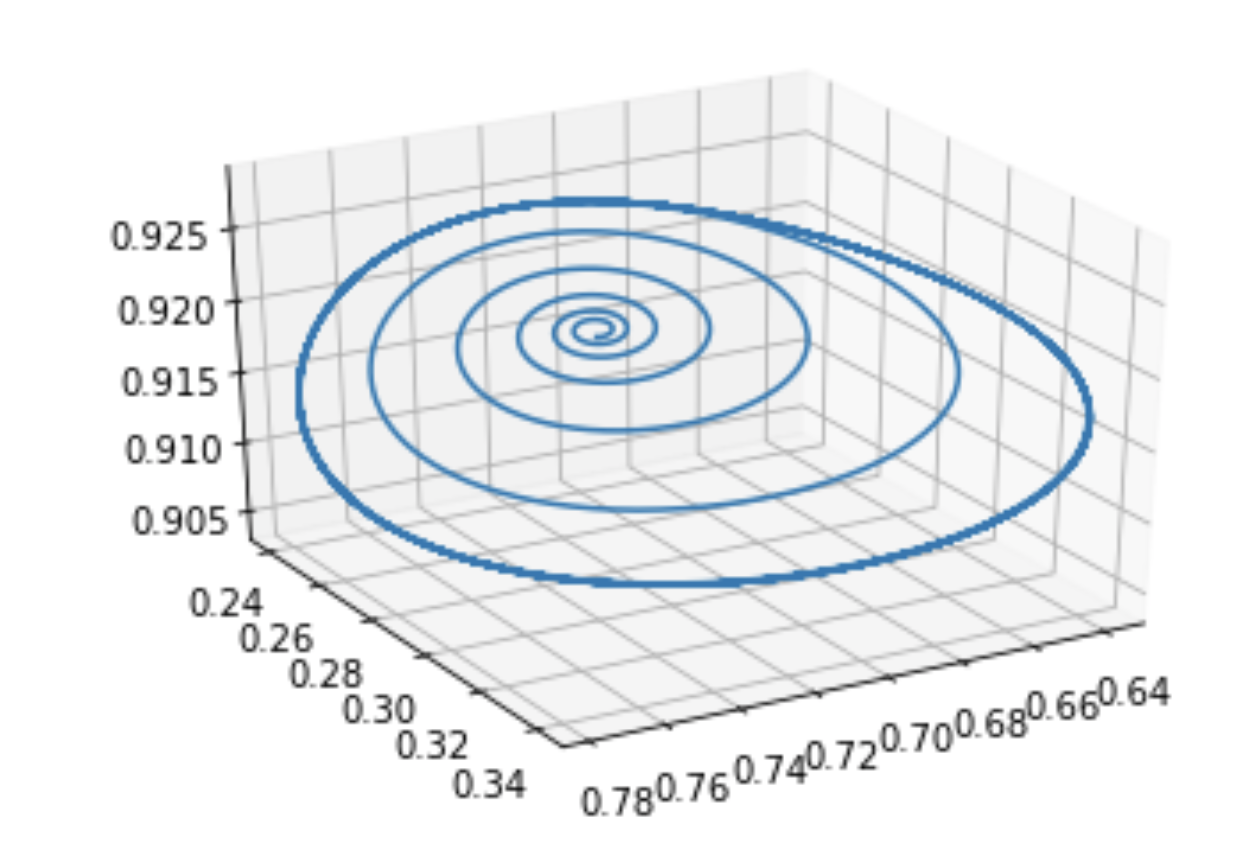}
}
\subfigure[time courses of all species]{
\includegraphics[scale=0.25]{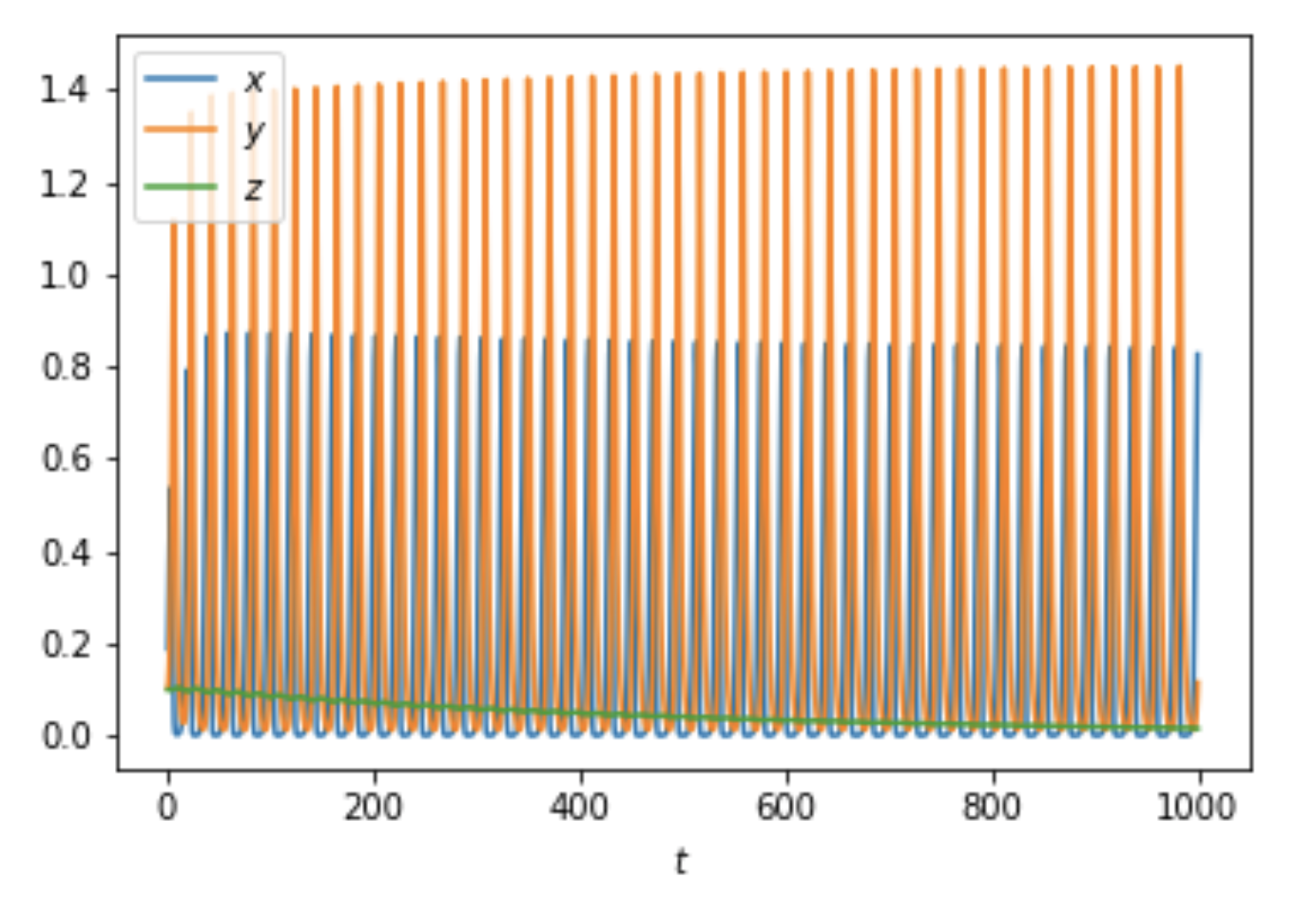}
}
\subfigure[dynamics of \eqref{foodchain}]{
\includegraphics[scale=0.25]{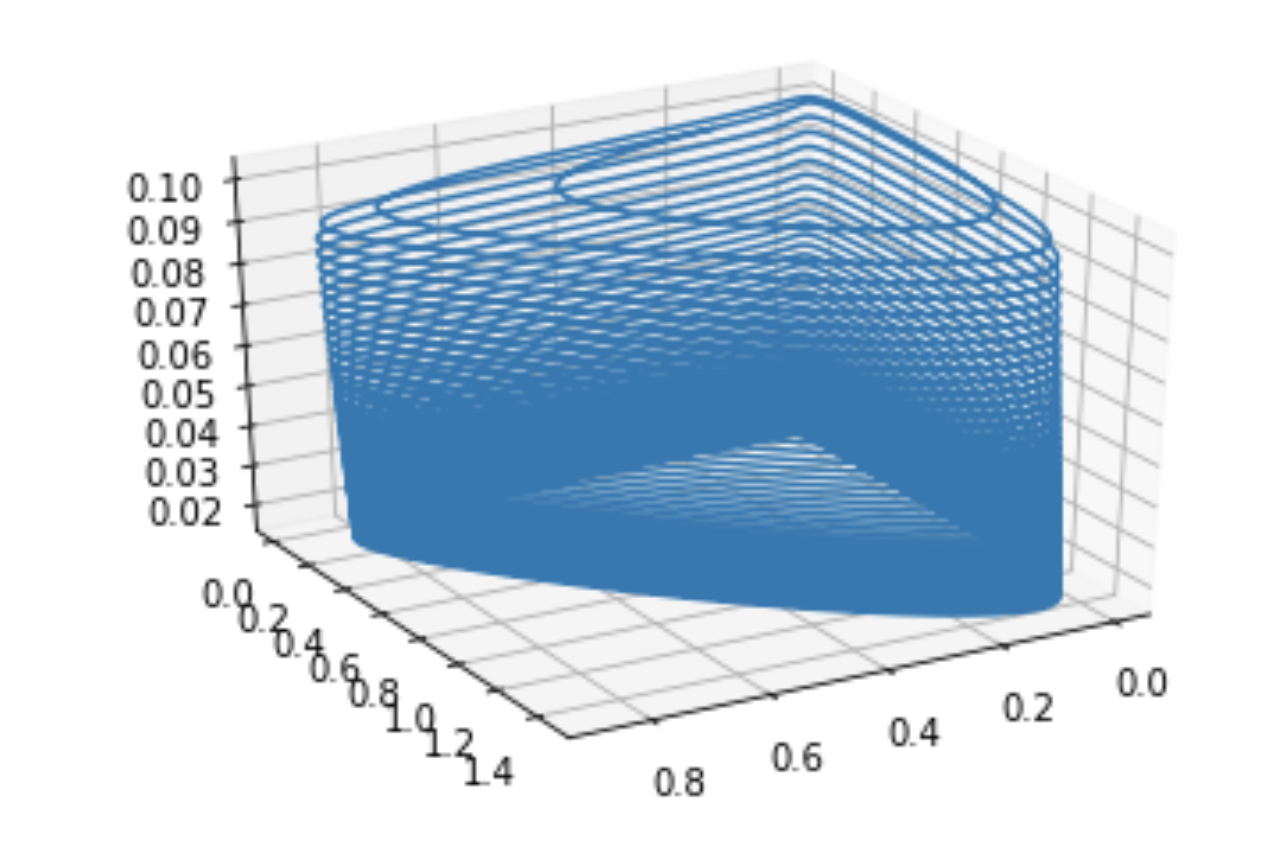}
}
\subfigure[dynamics of \eqref{foodchain}]{
\includegraphics[scale=0.3]{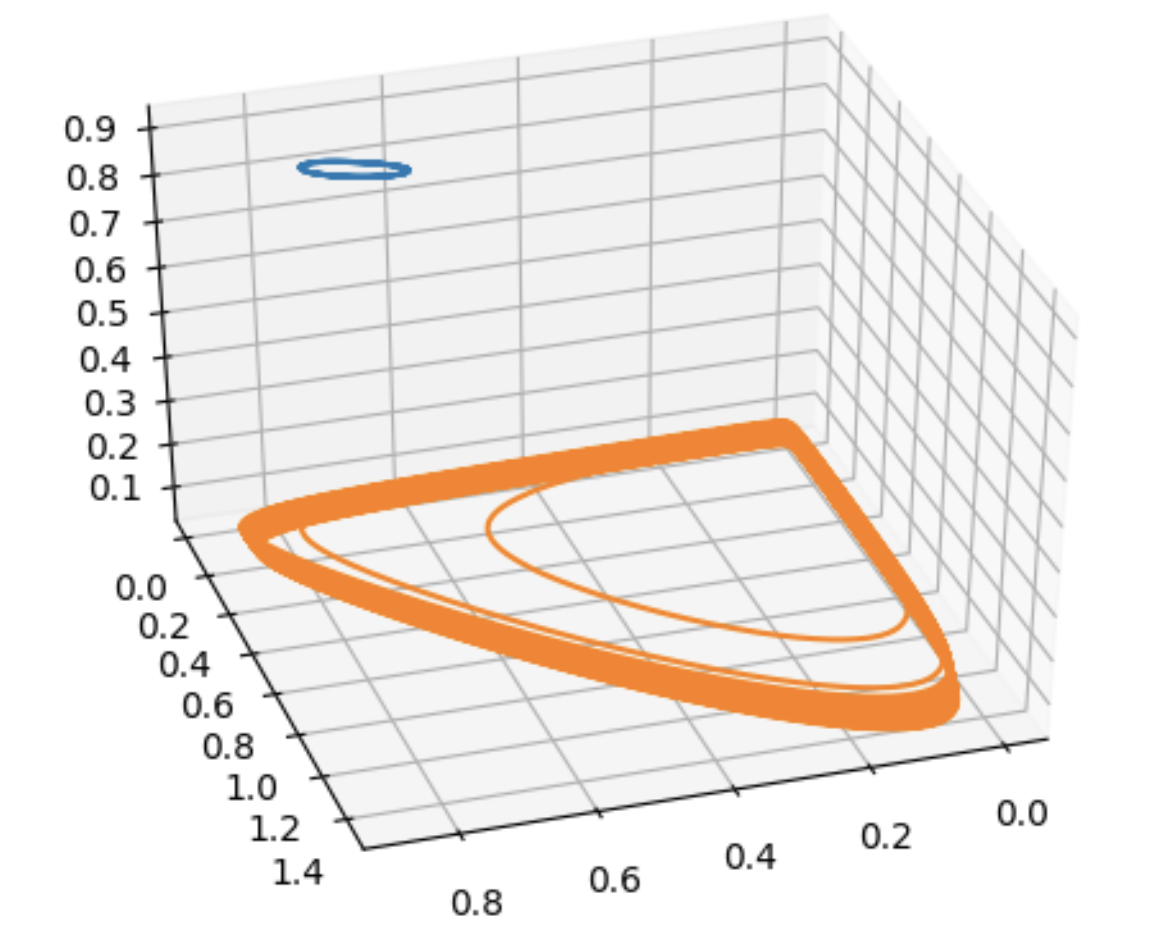}
}
\end{center}
\caption{Set parameters, $a_1 = 0.3$, $m_1 = 5/3$, $d_1 = 0.4$, $a_2 = 0.9$, $d_2 = 0.01$, and $m_2=0.042$. There is a stable interior periodic solution and a limit cycle on $H_2$. The behavior of asymptotic dynamics is dependent on the initial condition. Please see the details in the context.}\label{figure4}
\end{figure}

\item $m_2=0.065$ : For this case, there is also only one positive equilibrium which is unstable by checking the R-H criterion. Similarly, we simulate two trajectories of \eqref{foodchain} whose initial points are close to the positive equilibrium (panels (a) and (b) of Figure \ref{figure5}) and close to $\Gamma$  (panels (c) and (d) of Figure \ref{figure5}), respectively. It can be observed that the limit cycle on $H_2$ is not stable anymore by checking the time course of species $z$ (green line) in the panel (c) of Figure \ref{figure5}, which suggests that \eqref{zsurvive} holds and species $z$ survives. Moreover, in panels (a) and (b) of Figure \ref{figure5}, we can see a strange attractor similar to the results in \cite{Hastings:1991tv}, and the trajectory with initial points close to $\Gamma$ approaches the same strange attractor eventually. This numerical result suggests that the interior periodic solution is also unstable.

\begin{figure}[hp]
\begin{center}
\subfigure[time courses of all species]{
\includegraphics[scale=0.25]{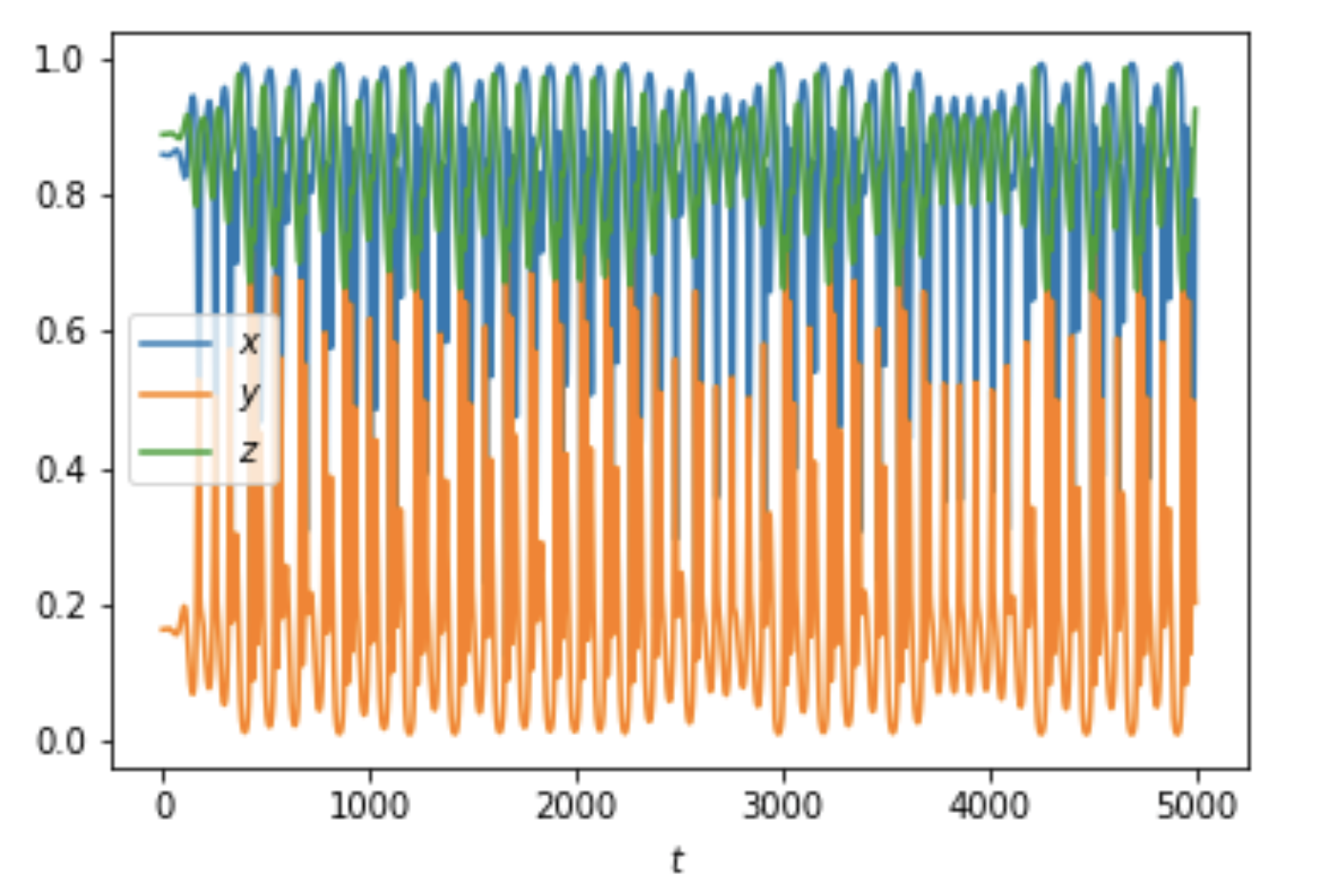}
}
\subfigure[dynamics of \eqref{foodchain}]{
\includegraphics[scale=0.25]{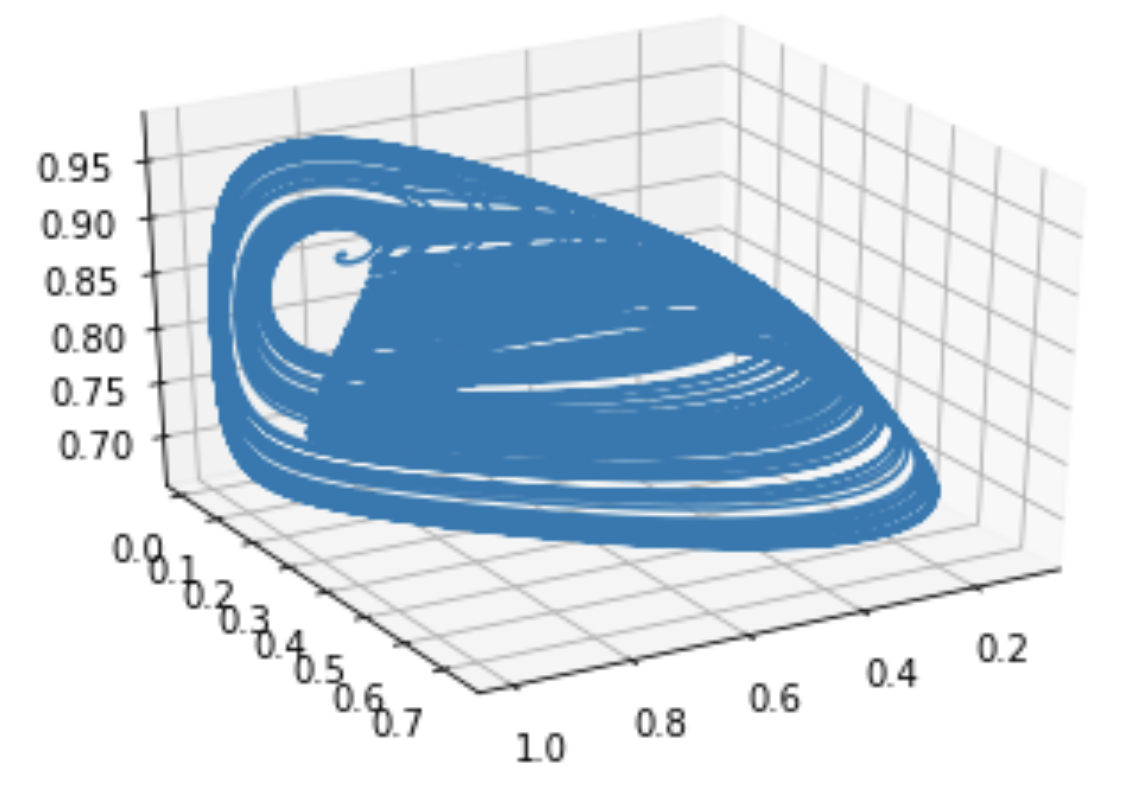}
}
\subfigure[time courses of all species]{
\includegraphics[scale=0.25]{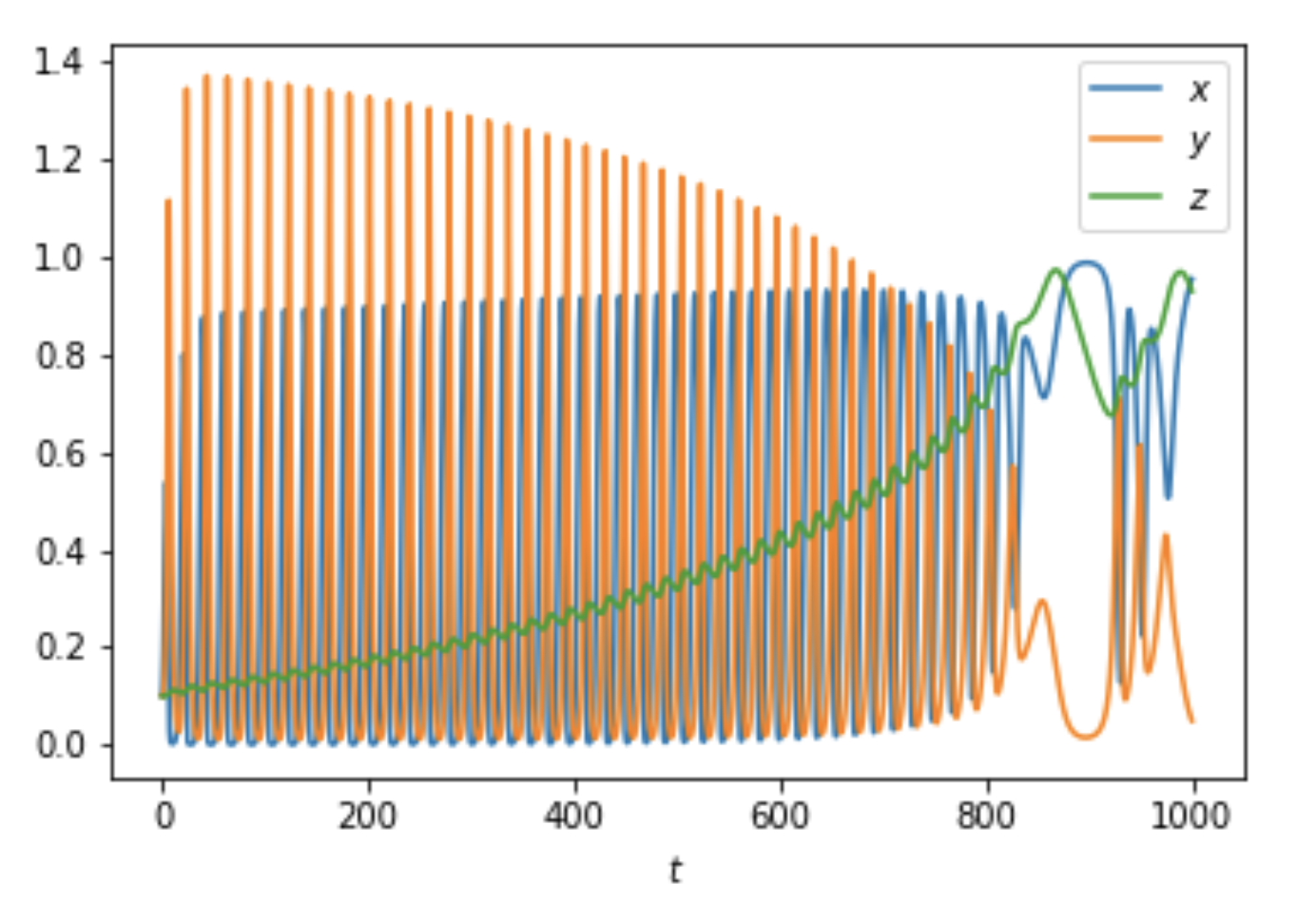}
}
\subfigure[dynamics of \eqref{foodchain}]{
\includegraphics[scale=0.25]{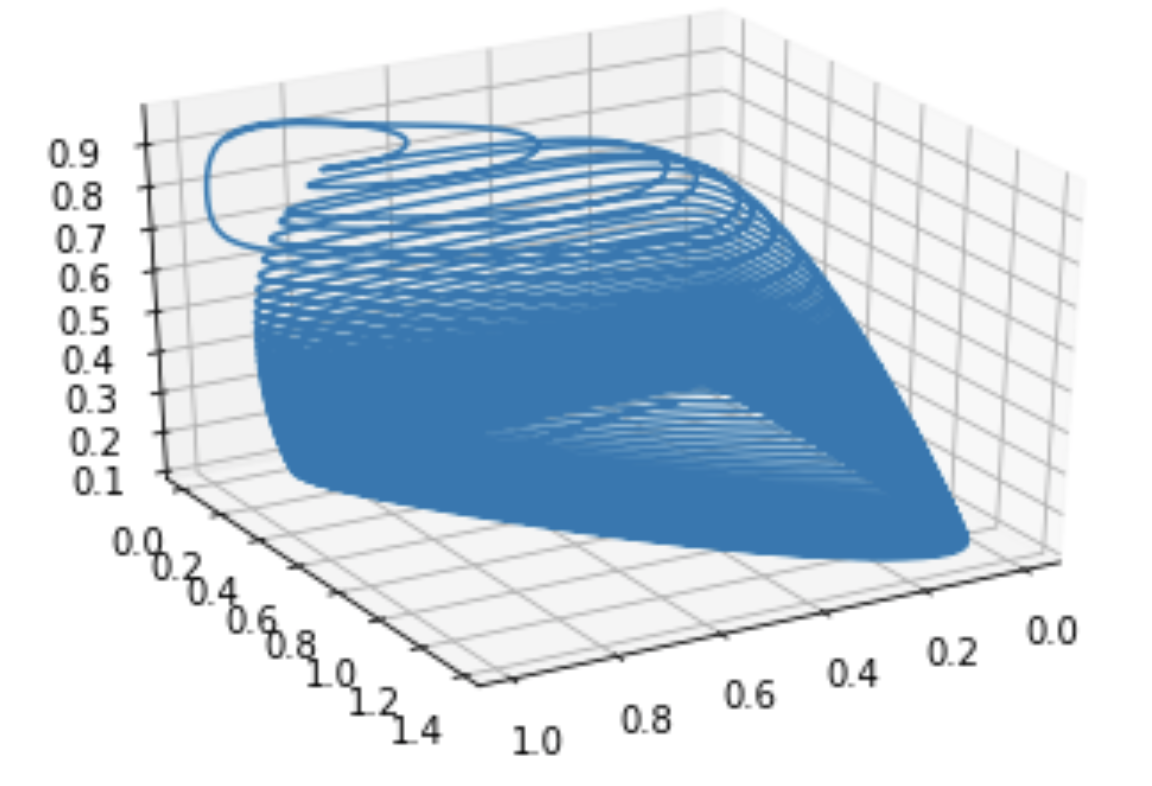}
}
\end{center}
\caption{Set parameters, $a_1 = 0.3$, $m_1 = 5/3$, $d_1 = 0.4$, $a_2 = 0.9$, $d_2 = 0.01$, and $m_2=0.065$. There is a strange attractor and a limit cycle on $H_2$ which is not stable. It is easy to see that solutions with small $z$ coordinate is attracted to the strange attractor in the panel (c). Please see the details in the context.}\label{figure5}
\end{figure}
\end{enumerate}

Finally, we would like to give some interpretations of the previous numerical simulations comparing with other similar interesting investigations \cite{Hogeweg:1978, Scheffer1991, Hastings:1991tv, Kuznetsov1996}. The values of parameters in Table 1 of \cite{Kuznetsov1996} (the parameters value with black color in Table \ref{values_table}) are transformed to fit our model \eqref{foodchain} (the parameters value with red color in Table \ref{values_table}), where the death rates $d_i$ are the same, $m_i$ of model \eqref{foodchain} equal to $a_i/b_i$, and $a_i$ is equal to $1/b_i$, respectively. Direct numerical computations show that these three cases belong to the category (II)(2)(b)(iv) of Table \ref{table1}. So we conjecture that the most likely scenario of chaos is the interplay between two unstable periodic solutions. One is $\Gamma$ and another one is bifurcated by the positive unique unstable equilibrium. 

\begin{table}[htp]
\caption{The values of parameters in Table 1 of \cite{Kuznetsov1996} (the parameters value with black color) are transformed to fit our model \eqref{foodchain} (the parameters value with red color.}\label{values_table}
\begin{center}
\begin{tabular}{l|c|c|c|c|c|c|c}
\hline
{\bf Reference} & $a_1(\textcolor{red}{m_1})$ & $a_2(\textcolor{red}{m_2})$ & $b_1(\textcolor{red}{a_1})$ & $b_2(\textcolor{red}{a_2})$ & $d_1$ & $d_2$ & Classification\\
\hline
Hogeweg et. al. \cite{Hogeweg:1978} &1.81 & 0.181 & 4.5 & 0.45 & 0.16 & 0.08 & \\
\textcolor{red}{Hogeweg et. al. \cite{Hogeweg:1978}} &\textcolor{red}{0.402} & \textcolor{red}{0.402} & \textcolor{red}{0.222} & \textcolor{red}{2.22} & \textcolor{red}{0.16} & \textcolor{red}{0.08} & \textcolor{red}{(II)(2)(b)(iv)}\\
\hline
\hline
Scheffer \cite{Scheffer1991} & 8.0 & 2.88 & 6.66 & 2.4 & 0.87 & 0.25 &\\
\textcolor{red}{Scheffer \cite{Scheffer1991}} & \textcolor{red}{1.2} & \textcolor{red}{1.2} & \textcolor{red}{0.15} & \textcolor{red}{0.417} & \textcolor{red}{0.87} & \textcolor{red}{0.25} & \textcolor{red}{(II)(2)(b)(iv)}\\
\hline
\hline
Hastings et. al. \cite{Hastings:1991tv} & 5.0 & 0.1 & 4.0 & 2.0 & 0.4 & 0.01 &\\
\textcolor{red}{Hastings et. al. \cite{Hastings:1991tv}} & \textcolor{red}{1.25} & \textcolor{red}{0.05} & \textcolor{red}{0.25} & \textcolor{red}{0.5} & \textcolor{red}{0.4} & \textcolor{red}{0.01} &\textcolor{red}{(II)(2)(b)(iv)}\\
\hline
\hline
\end{tabular}
\end{center}
\end{table}%


\begin{thebibliography}{10}


\bibitem{Ardito:1995}
A.~Ardito and P.~Ricciardi.
\newblock{Lyapunov functions for a generalized Gause-type model}.
\newblock{\em Journal of Mathematical Biology},  {\bf 33}(8):816--828, 1995.

\bibitem{Boer:1999gh}
M.~P. Boer, B.~W. Kooi and S.~A. L.~M. Kooijman.
\newblock {Homoclinic and heteroclinic orbits to a cycle in a tri-trophic food
  chain}.
\newblock {\em Journal of Mathematical Biology},  {\bf 39}(1):19--38, 1999.

\bibitem{Bulter:1983}
G. J. Butler, S. B. Hsu and P. Waltman.
\newblock{Coexistence of competing predators in a chemostat}.
\newblock {\em Journal of Mathematical Biology},  {\bf 17}(2):133--151, 1983.

\bibitem{Cheng:1981ga}
K.~S. Ch{\^e}ng.
\newblock {Uniqueness of a limit cycle for a predator-prey system}.
\newblock {\em SIAM Journal on Mathematical Analysis},  {\bf 12}(4):541--548, 1981.

\bibitem{Cheng:1981}
K. S. Ch{\^e}ng, S. B. Hsu and S. S. Lin,
\newblock{Some results on global stability of predator-prey system},
\newblock {\em Journal of Mathematical Biology},  {\bf 12}(1):115--126, 1981.


\bibitem{Chiu:1998}
C.-H. Chiu and S.-B. Hsu.
\newblock {Extinction of top-predator in a three-level food-chain model}.
\newblock {\em Journal of Mathematical Biology},  {\bf 37}(4):372--380, 1998.

%
%
%
%

\bibitem{ermentrout2002simulating}
Bard Ermentrout.
\newblock{Simulating, analyzing, and animating dynamical systems: a guide to XPPAUT for researchers and students},
Vol. 14, 2002, SIAM Publish.

\bibitem{Freedman:1985eg}
H.~I. Freedman and J.~W.~H. So.
\newblock {Global stability and persistence of simple food chains}.
\newblock {\em Mathematical Biosciences}, {\bf 76}(1):69--86, 1985.

\bibitem{Freedman:1977vw}
H.~I. Freedman and P.~Waltman.
\newblock {Mathematical analysis of some three-species food-chain models}.
\newblock {\em Mathematical Biosciences}, {\bf 33}(3-4):257--276, 1977.
%
\bibitem{Freedman1984}
H. I. Freedman and P. Waltman
\newblock{Persistence in models of three interacting predator-prey populations}.
\newblock{\em Mathematical Biosciences}. {\bf 68}(2) : 213--231, 1984.




\bibitem{Hastings:1991tv}
A.~Hastings and T.~Powell.
\newblock {Chaos in a three-species food chain}.
\newblock {\em Ecology}, {\bf 72}(3):896--903, 1991.

\bibitem{Hogeweg:1978}
P. Hogeweg and B. Hesper.
\newblock{Interactive instruction on population interactions}.
\newblock{\em Comp. Biol. Med.} {\bf 8}:319-327, 1978.

\bibitem{Hsu:1978by}
S.~B. Hsu.
\newblock {On global stability of a predator-prey system}.
\newblock {\em Mathematical Biosciences}, {\bf 39}(1-2):1--10, May 1978.

\bibitem{Hsu:1978vk}
S.~B. Hsu, S.~P. Hubbell and P.~Waltman.
\newblock {A Contribution to the Theory of Competing Predators}.
\newblock {\em Ecological Monographs}, {\bf 48}(3):337--349, 1978.


\bibitem{Hsu:2015}
S. B. Hsu, S. Ruan and T.-H. Yang.
\newblock{Analysis of three species Lotka-Volterra food web models with omnivory}
\newblock{\em Journal of Mathematical Analysis and Applications,}
{\bf 426}(2) : 659--687, 2015.

\bibitem{Iooss1992}
G. Iooss and M. Adelmeyer.
\newblock{\em Topics in Bifurcation Theory and Applications.} 
\newblock{World Scientific Pub. Company, Inc.}



\bibitem{Klebanoff:1994ew}
A.~Klebanoff and A.~Hastings.
\newblock {Chaos in three-species food chains}.
\newblock {\em Journal of Mathematical Biology}, {\bf 32}(5):427--451, 1994.

\bibitem{Kuznetsov:2001ee}
Y.~A. Kuznetsov, O.~De~Feo and S.~Rinaldi.
\newblock {Belyakov homoclinic bifurcations in a tritrophic food chain model}.
\newblock {\em SIAM Journal on Applied Mathematics}, {\bf 62}(2):462--487, 2001.

\bibitem{Kuznetsov1996}
Y. A. Kuznetsov and S. Rinaldi
\newblock{Remarks on food chain dynamics},
\newblock{\em Mathematical Biosciences},
{\bf 134}(1) : 1--33, 1996.


\bibitem{Mccann1995}
K. Mccann and P. Yodzis
\newblock{Bifurcation structure of a three-species food-chain model},
\newblock{\em Theoretical Population Biology},
 {\bf 48}(2) : 93--125, 1995.
  

\bibitem{Markus:1956}
L. Markus.
\newblock {Asymptotically autonomous differential systems, Contributions to the Theory of Nonlinear Oscillation Vol. 3}.
\newblock{Princeton University Press}, 1956, pp.17-29.





\bibitem{Scheffer1991}
M. Scheffer.
\newblock{ Should we expect strange attractors behind plankton dynamics and if so, should we bother?} 
\newblock{\em J. Plankton Res.} {\bf 13}:1291-1305, 1991.


\end{thebibliography}
\end{document}